\crefname{subsection}{subsection}{subsections}
\pgfplotsset{compat=1.12}
\tikzstyle{seps} = [circle, minimum size = 1cm, draw = black, fill=none]
\tikzstyle{arrow} = [thick,->,>=stealth]
\tikzstyle{darrow} = [thick,<->,>=stealth]
\newdimen\nodeDist
\renewcommand{\r}[1]{\textcolor{red}{#1}}
\title{Hierarchical Orthogonal Factorization: Sparse Square Matrices\thanks{Submitted to the editors on \today{
\funding{ This work was partly funded by a grant from Sandia National Laboratories (Laboratory Directed Research and Development [LDRD]) entitled ``Hierarchical Low-rank Matrix Factorizations,'' and a grant from the National Aeronautics and Space Administration (NASA, agreement \#80NSSC18M0152).}}}}
\author{Abeynaya Gnanasekaran\thanks{Institute for Computational and Mathematical Engineering, Stanford University, CA
  (\email{abeynaya@stanford.edu}), (\email{darve@stanford.edu})}
\and Eric Darve\footnotemark[2]}
\begin{document}

\maketitle
\begin{abstract}
In this work, we develop a new fast algorithm, spaQR --- sparsified QR, for solving large, sparse linear systems. The key to our approach is using low-rank approximations to sparsify the separators in a Nested Dissection based Householder QR factorization. First, a modified version of Nested Dissection is used to identify interiors/separators and reorder the matrix. Then, classical Householder QR is used to factorize the interiors, going from the leaves to the root to the elimination tree. After every level of interior factorization, we sparsify the remaining separators by using low-rank approximations. This operation reduces the size of the separators without introducing any fill-in in the matrix. However, it introduces a small approximation error which can be controlled by the user. The resulting approximate factorization is stored as a sequence of sparse orthogonal and sparse upper-triangular factors.  Hence, it can be applied efficiently to solve linear systems. 
Additionally, we further improve the algorithm by using a block diagonal scaling. Then, we show a systematic analysis of the approximation error and effectiveness of the algorithm in solving linear systems. Finally, we perform numerical tests on benchmark unsymmetric problems to evaluate the performance of the algorithm. The factorization time scales as $\mathcal{O}(N \log N)$ and the solve time scales as $\mathcal{O}(N)$.    
\end{abstract}
\begin{keywords}
 Householder reflections, hierarchical matrix, low-rank, sparse linear solver, nested dissection
\end{keywords}

\begin{AMS}
65F05, 65F08, 65F25, 65F50, 	65Y20
\end{AMS}

\section{Introduction}

We are interested in solving large, sparse, unsymmetric linear systems, 
\[
Ax =b, \quad A \in \mathbb{R}^{N \times N}.
\]
Iterative methods are preferred for sparse linear systems as they depend only on matrix-vector products, which can be computed in $\mathcal{O}\big(\text{nnz}(A)\big)$ time. Popular examples include Krylov space methods such as CG~\cite{Hestenes&Stiefel:1952}, GMRES~\cite{Saad1986GMRESAG}, MINRES~\cite{citeulike:10745617}. However, iterative methods rarely work well without good preconditioners which are essential for fast convergence to the solution. 

A naive LU or QR factorization of the matrix can cost $\mathcal{O}(N^3)$ even for sparse matrices due to the fill-in introduced during the factorization. However, one can ignore some of the fill-in entries to get an ``incomplete'' factorization of the matrix, which can then be used as a preconditioner for solving the associated linear system. For example, preconditioners like Incomplete LU~\cite{Saad1994ILUTAD}, Incomplete QR~\cite{jennings, Saad1988PreconditioningTF} and Incomplete Cholesky~\cite{Manteuffel1980AnIF} limit fill-in based on thresholding and on a prescribed maximum number of non-zeros in a row/column. While such methods are common in literature, there are no convergence guarantees nor provable efficiency for these preconditioners. In practice, they can fail for a large number of problems~\cite{Chow1997ExperimentalSO}. However, better preconditioners can be built when additional information on the problem is available.

In the recent years, another class of preconditioners have been developed based on the observation that certain off-diagonal blocks of $A$ or $A^{-1}$ are numerically low-rank. The matrices that exhibit this property are termed Hierarchical ($\mathcal{H}$) matrices~\cite{hmatrix_2, hmatrix_1, hmatrix_3, Hackbusch2000ASH}. While these methods were originally developed for dense matrices, there have been efforts to extend these ideas to sparse matrices, especially matrices arising out of PDE discretizations. These efforts have been focused on incorporating fast $\mathcal{H}-$algebra with a nested dissection based multifrontal elimination~\cite{mumps, H_QR,C:LaBRI::CIMI15, Ghysels2016AnEM, blr_pastix,  Schmitz2012AFD, Xia2013EfficientSM, Xia2009SuperfastMM}. For instance, a matrix-vector product can be done in almost linear time when the dense fronts are represented using low-rank bases.  

In contrast, we focus on another approach: continually decrease the size of the nested dissection separators by applying a low-rank approximation. As the size of the separators are reduced at every step, the algorithm never deals with large dense fronts. Some examples of these fast hierarchical solvers are the Hierarchical Interpolative Factorization (HIF)~\cite{feliufaba2020hierarchical, Ho2016HierarchicalIF}, LoRaSp~\cite{lorasp1, lorasp2} and Sparsified Nested Dissection (spaND)~\cite{2019arXiv190102971C, klockiewicz2020second}. All three algorithms were developed to perform fast Cholesky factorization of symmetric positive definite matrices. HIF and spaND have been extended to perform a fast LU factorization on unsymmetric matrices~\cite{Ho2016HierarchicalIF}. However, LU is known to be unstable unless a robust pivoting strategy is used which can be difficult for sparse matrices. Current sparse direct solvers often rely on \textit{ad hoc} techniques such as ignoring small pivots and replacing them by some large value $\epsilon^{-1}$ or postponing the elimination, leading to significant fill-in and an increase in the computational cost. 

In this work, we propose a novel fast hierarchical solver to perform QR factorization on sparse, square matrices using low-rank approximations. The algorithm can be extended, with some changes, to solve sparse linear least-squares problems. This will be discussed in a future work. The use of orthogonal transformations in the QR decomposition ensures stability and allows for a more robust treatment of unsymmetric matrices.  The resulting approximate factorization can then be used as a preconditioner with GMRES to solve general linear systems. Specifically, our algorithm produces a sparse approximate factorization of $A$ in near linear time, such that, 
\[ A \approx QW = \prod_i Q_i \prod_j W_j \]
where each $Q_i$ is a sparse orthogonal matrix and $W_j$ is either sparse orthogonal or sparse upper triangular. While $W$ is not necessarily upper triangular, we still use the term ``fast QR solver'' as the algorithm is built on top of classical Householder QR. 

\subsection{Contribution}

We propose, implement, and provide theoretical guarantees on a novel QR algorithm for unsymmetric, sparse matrices with full-rank. We henceforth refer to the algorithm as spaQR, or Sparsified QR. Our algorithm is built upon the ideas of the spaND algorithm, which was originally developed for SPD matrices. However, the existence and intuition behind spaQR is more involved as explained in \Cref{spars_s} and \Cref{Related_chol}. We summarize our main contributions as follows:
\begin{itemize}
    \item We propose and implement a novel fast QR algorithm with tunable accuracy for sparse square matrices. 
    \item We provide a systematic analysis of the approximation error and effectiveness of the preconditioner.
    \item We implement an additional block diagonal scaling that significantly improves the error and effectiveness of the preconditioner. The improvements from scaling are shown both theoretically and numerically.  
    \item We show that the factorization time scales as $\mathcal{O}(N \log N)$ and the solve time as $\mathcal{O}(N)$, under some assumptions
    \item We perform numerical tests on benchmark unsymmetric problems.
    \item The C++ code for the algorithm is freely available for download and use at this \href{https://github.com/Abeynaya/spaQR_public}{link}. The benchmarks can be reproduced by running the scripts available in the repository. 
\end{itemize}

The rest of the paper is organized as follows. \Cref{Sec: Algo} introduces the algorithm and the block scaling. This is followed by theoretical guarantees on the approximation error, effectiveness of the preconditioner and the complexity of the algorithm in \Cref{theoretical_results_sec}. Numerical results are discussed in \Cref{benchmarks}. Finally, we discuss directions for future research. We also give some intuition behind the algorithm and different variants of the algorithm in \Cref{Related_chol}.

\section{Algorithm}
\label{Sec: Algo}

We begin with a discussion on classical sparse QR factorization based on Householder transformations and Nested Dissection, giving an overview on the fill-in generated during the factorization. This is followed by a high level overview of the spaQR algorithm, followed by a detailed discussion and a discussion on the block diagonal scaling. 
 
\subsection{Sparse QR}
\label{QR}

Consider the Householder-based QR factorization of a sparse matrix $A\in \mathbb{R}^{m\times n}$ with $m \geq n$. Let $A^{[k]}$ denote the product $H_k H_{k-1} \dots H_1A$, where $H_k$ is the $k$-th Householder matrix. The sparsity of row $k$ in $R$ (and $A^{[k]}(k:m,:)$) can be understood in relation to the sparsity of $A^{[k-1]}$. When column $k$ of $A^{[k-1]}$  is operated on, all the rows $r$, that have non-zero entries in that column are affected. We introduce fill-in (or modify the existing entries) in all columns $c$ such that $A^{[k-1]}_{rc} \ne 0$ for any $r$ such that $A^{[k-1]}_{rk} \ne 0$. This can be seen as interactions between distance 1 and distance 2 neighbors (ignoring the direction of the edges) of node $k$ in \Cref{fillin}. This is in contrast to performing Gaussian Elimination on a matrix A, where we only have new interactions between distance 1 neighbors. Thus, fill-in in Householder QR is higher compared to the fill-in in Cholesky or LU factorization of a matrix. However, if $A$ has full column rank then the QR decomposition of $A$ and the Cholesky decomposition of $A^TA$ are related. In particular, if $A^TA = LL^T$, then $L = R^T(1:n, 1:n)$~\cite{10.5555/248979}.

\begin{figure}[tbhp]
    \centering
    \scalebox{0.75}{
    \begin{tikzpicture}[node distance=2cm]
\node (s) [seps] {$k$}; 
\node (n1) at ($ (s) + (45:2) $) [seps] {$n_1$};
\node (n2) at ($ (s) + (-45:2) $) [seps] {$n_2$};
\node (p) [seps, right of=n1] {$p$};
\node (q) [seps, right of=n2] {$q$};
\draw [arrow] (s.45) -- (n1);
\draw [arrow] (s.-45) -- (n2);
\draw [arrow] (p) -- (n1);
\draw [arrow] (q) -- (n2);
\end{tikzpicture}}
\hspace{2cm}
\scalebox{0.75}{
\begin{tikzpicture}[node distance=2cm]
\node (s) [seps] {$k$}; 
\node (n1) at ($ (s) + (45:2) $) [seps] {$n_1$};
\node (n2) at ($ (s) + (-45:2) $) [seps] {$n_2$};
\node (p) [seps, right of=n1] {$p$};
\node (q) [seps, right of=n2] {$q$};
\draw [arrow] (p) -- (n1);
\draw [arrow] (q) -- (n2);
\draw [darrow, dashed, color=red] (n1) -- (n2);
\draw [arrow, dashed, color=red] (q) -- (n1);
\draw [arrow, dashed, color=red] (p) -- (n2);
\draw [arrow] (n1) -- (s);
\draw [arrow] (n2) -- (s);
\draw [arrow, dashed, color=red] (p) -- (s);
\draw [arrow, dashed, color=red] (q) -- (s);
\end{tikzpicture}}
\quad

\[
\begin{matrix}
     & k & n_1 & n_2 & p & q \\
k   & \star &      &      &   &   \\
n_1 & \star  & \star     &      & \star   &   \\
n_2 & \star  &      &  \star    &   & \star  
\end{matrix}
\hspace{3cm}
\begin{matrix}
     & k & n_1 & n_2 & p & q \\
k    & \star & \r{\times}     &  \r{\times}    & \r{\times}  & \r{\times}   \\
n_1 &   & \star     & \r{\times}     & \star   & \r{\times}  \\
n_2 &   &  \r{\times}    &  \star    & \r{\times}  & \star  
\end{matrix}\]
    \caption{The graph of a sample matrix shown before and after one step of householder transformation on column $k$. There is a directed edge from node $j$ to node $i$ in the graph if $A(i,j) \ne 0$.  The fill-in entries are represented by red $\times$ symbols and the corresponding edges are denoted by red dashed lines.}
    \label{fillin}
\end{figure}
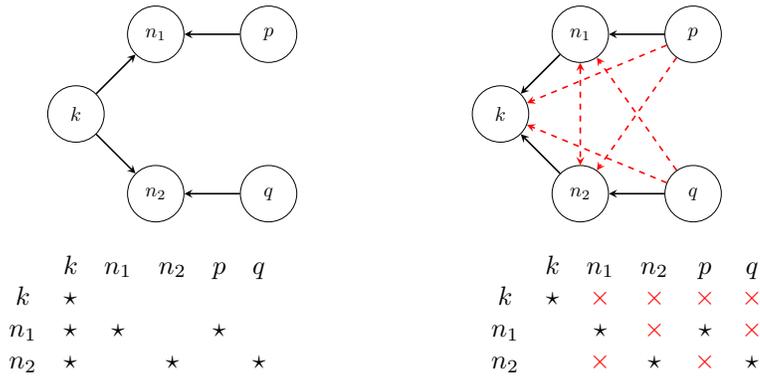

The relationship between the two factorizations allows us to extend the column reordering strategies developed for Cholesky to QR. The problem of finding an optimal permutation matrix $P$ for an SPD matrix $S$, such that the Cholesky factor of $PSP^T = LL^T$ has minimum fill-in is NP-hard. However, practical techniques based on heuristics have been developed and studied over the years. Some examples include minimum degree ordering, nested dissection, and Cuthill-McKee ordering. The reordering strategy that we use is Nested Dissection (ND) as it provides a convenient way to define separators and reinterpret the matrix as a block matrix. ND is a type of graph partitioning and works by recursively subdividing a graph while minimizing the number of edge cuts. 

Consider the sparse symmetric matrix $A^TA = S \in \mathbb{R}^{N \times N}$ and its graph $G_S = (V,E)$ where $V = \{1, 2, \dots, N\}$ and $E = \{(i,j): S_{ij}\ne 0\}$. ND works by finding vertex separators, which are groups of vertices that divide the graph into two disconnected components. \Cref{ND_seps} shows the vertex separators when recursively subdividing the graph three times. The process stops when the cluster sizes are small enough to be factored using a dense factorization scheme. 

The matrix factorization starts at the \textit{leaves}, which are the vertex \textit{clusters} at the last level (for example, $l=4$ in \Cref{ND_etree}) of the ND ordering. Once these are factorized, the factorization proceeds to the separators at the next lower level ($l=3$ in \Cref{ND_etree}) and continues to the top of the tree. This can be represented using an elimination tree as shown in \Cref{ND_etree}. The edges in the elimination tree indicate the dependencies between operations. Clusters at the same level can be operated on independently of one another. By factorizing from the leaves to the root of the elimination tree, we never create an edge (fill-in) between vertex clusters that are originally separated. The vertex separators obtained from the ND process on the matrix $A^TA$ provide a column partition for the matrix $A$, with the same fill-in guarantees. We discuss row partitioning ideas in \Cref{ord_clus}.

\begin{figure}[tbhp]
    \centering
    \begin{subfigure}[t]{0.35\textwidth}
        \centering
        \scalebox{0.75}{
        \begin{tikzpicture}
            \filldraw[fill = white, rounded corners, line width = 0.25mm] (0, 0) rectangle (3, 4) {};
           \filldraw[fill = darkgray, rounded corners, line width= 0.25mm] (1.325, 0) rectangle (1.625, 4) {};
            \filldraw[fill= gray, line width =0.25mm, rounded corners] (0, 2.5) rectangle (1.325, 2.75) {};
            \filldraw[fill= gray, line width =0.25mm, rounded corners] (1.625, 1.5) rectangle (3, 1.75) {};
            \filldraw[fill= lightgray, line width =0.25mm, rounded corners] (0.65, 0) rectangle (0.9, 2.5) {};
            \filldraw[fill= lightgray, line width =0.25mm, rounded corners] (0, 3.25) rectangle (1.325, 3.5) {};
            \filldraw[fill= lightgray, line width =0.25mm, rounded corners] (2.275, 1.75) rectangle (2.525, 4) {};
            \filldraw[fill= lightgray, line width =0.25mm, rounded corners] (1.625, 0.75) rectangle (3, 1) {};
        \end{tikzpicture}
        }
        \caption{Vertex separators}
        \label{ND_seps}
    \end{subfigure}%
   ~
    \begin{subfigure}[t]{0.6\textwidth}
        \centering
        \scalebox{0.75}{
        \begin{tikzpicture}[level distance=0.9cm,
  level 1/.style={sibling distance=3cm},
  level 2/.style={sibling distance=1.5cm},
  level 3/.style={sibling distance=0.75cm},
  edge from parent/.append style = {line width = 0.25mm}]
  \node (Root) [circle, draw=black, fill=darkgray] {}
    child {node [circle, draw=black, fill=gray]{}
      child {node [circle, draw=black, fill=lightgray]{}
        child {node [circle, draw=black]{}}
        child {node [circle, draw=black]{}}
      }
      child {node [circle, draw=black, fill=lightgray]{}
        child {node [circle, draw=black]{}}
        child {node [circle, draw=black]{}}
      }
    }
    child {node [circle, draw=black, fill=gray]{}
      child {node [circle, draw=black, fill=lightgray]{}
        child {node [circle, draw=black]{}}
        child {node [circle, draw=black]{}}
      }
      child {node [circle, draw=black, fill=lightgray]{}
        child {node [circle, draw=black]{}}
        child {node [circle, draw=black]{}}
      }
    };
    \begin{scope}[every node/.style={right}]
     \path (Root    -| Root-2-2-2) ++(5mm,0) node {$l=1$};
     \path (Root-1  -| Root-2-2-2) ++(5mm,0) node {$l=2$};
     \path (Root-1-1-| Root-2-2-2) ++(5mm,0) node {$l=3$};
     \path (Root-1-1-1-| Root-2-2-2) ++(5mm,0) node {$l=4$};
   \end{scope}

        \end{tikzpicture}}
        \caption{Elimination tree}
        \label{ND_etree}
    \end{subfigure}
    \caption{A four level nested dissection on an arbitrary graph. The figure on the left shows the vertex separators when recursively subdividing the graph and the figure on the right shows the corresponding elimination tree.}
    \label{ND}
\end{figure}
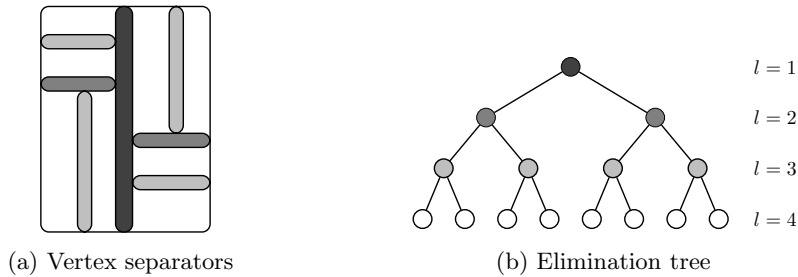

Nested Dissection ordering is usually used for elliptic partial differential equations discretized on 2D and 3D meshes. The cost of the Cholesky factorization on the reordered matrix reduces to $\mathcal{O}(N^{3/2})$ for 2D problems and $\mathcal{O}(N^2)$ for 3D problems, whereas the fill-in reduces to $\mathcal{O}(N \log N)$ in 2D and $\mathcal{O}(N^{3/4})$ in 3D~\cite{10.5555/248979}. 

Even with Nested Dissection, the fill-in is still significant. For 3D problems, the top separator has size $\mathcal{O}(N^{2/3})$ and its matrix block is dense when all its descendants are eliminated. Hence, the factorization of the top separator block will cost $\mathcal{O}(N^2)$. These arguments extend to the QR factorization, which has the same asymptotic cost. We can bring down the cost of performing QR on these problems to $\mathcal{O}(N)$ by `sparsifying' subsets of the separators as discussed next.

\subsection{Sparsified QR (spaQR)}
\label{spaQR_s}

The spaQR algorithm works by continually decreasing the size of a vertex separator in the trailing matrix by using a low-rank approximation of its neighbors. The algorithm alternates between factoring (block QR) the separators at a level $l$ and `sparsifying' the interfaces at all levels $l'>l$. 

We define an interface as a connected subset of a separator whose size is comparable to the diameter of the subdomains at that level. \Cref{interfaces_full} shows the distinction between separators and interfaces on a 3-level ND partition of a regular grid; \Cref{seps} shows the separators and \Cref{int} shows the interfaces. Denote the total number of levels as $L$ where the leaves correspond to $l = L$ and the root is at $l = 1$. Let $\hat{A}^l$ be the trailing matrix corresponding to level $1, 2, \dots, l$ of the matrix $A^{[l+1]} = H_{l+1}H_{l+2}\dots H_{L}A$, $\forall l< L$. Note that each of the householder matrices $H_k$ corresponds to a block reflector for the clusters at level $k$.

\begin{figure}[tbhp]
    \centering
    \begin{subfigure}[t]{0.25\textwidth}
        \centering
        \scalebox{0.75}{
        \begin{tikzpicture}
            \filldraw[fill = white, rounded corners, line width = 0.25mm] (0, 0) rectangle (3, 4) {};
            \filldraw[fill = gray, rounded corners, line width= 0.25mm] (1.325, 0) rectangle (1.625, 4) {};
            \filldraw[fill= lightgray, line width =0.25mm, rounded corners] (0, 3) rectangle (1.325, 3.25) {};
            \filldraw[fill= lightgray, line width=0.25mm, rounded corners] (1.625, 1) rectangle (3, 1.25) {};
        \end{tikzpicture}}
        \caption{Vertex separators}
        \label{seps}
    \end{subfigure}
    \begin{subfigure}[t]{0.25\textwidth}
        \centering
        \scalebox{0.75}{
        \begin{tikzpicture}
            \filldraw[fill = white, rounded corners, line width = 0.25mm] (0, 0) rectangle (3, 4) {};
            \filldraw[fill = gray, rounded corners, line width= 0.25mm] (1.325, 0) rectangle (1.625, 1) {};
            \filldraw[fill = gray, rounded corners, line width= 0.25mm] (1.325, 1) rectangle (1.625, 1.25) {};
            \filldraw[fill =gray, rounded corners, line width= 0.25mm] (1.325, 1.25) rectangle (1.625, 3) {};
            \filldraw[fill = gray, rounded corners, line width= 0.25mm] (1.325, 3) rectangle (1.625, 3.25) {};
            \filldraw[fill = gray, rounded corners, line width= 0.25mm] (1.325, 3.25) rectangle (1.625, 4) {};
            \filldraw[fill= lightgray, line width =0.25mm, rounded corners] (0, 3) rectangle (1.325, 3.25) {};
            \filldraw[fill= lightgray, line width=0.25mm, rounded corners] (1.625, 1) rectangle (3, 1.25) {};
        \end{tikzpicture}}
        \caption{Interfaces}
        \label{int}
    \end{subfigure}
    \caption{A three level nested dissection on an arbitrary graph. The figure on the left shows the usual nested dissection separators and the one on the right shows the interfaces.}
    \label{interfaces_full}
\end{figure}
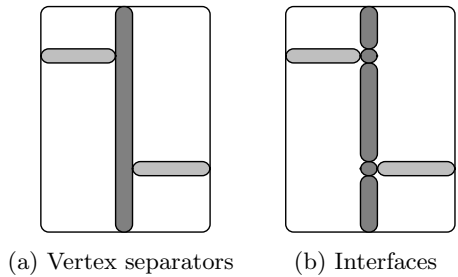

\[A^{[l+1]} = 
\begin{bmatrix}
R_{l+1:L,l+1:L} & R_{l+1:L,1:l} \\
& \hat{A}^l
\end{bmatrix}\]
where, $R_{l+1:L,l+1:L}$ is an upper-triangular block. The notation $R_{l+1:L,l+1:L}$ may appear confusing. Recall that $l=L$ corresponds to the leaf level in the tree (that is the ``top left'' part of the matrix), while $l=1$ is the top of the tree (this is the ``bottom right'' of the matrix). There is a slight inconsistency between the numbering of the levels in the tree ($l=1$ is the top) and the usual row/column numbering of the matrix (which starts at $l=L$ with our numbering). For consistency, we stick to indices associated with levels in the tree.

We can rewrite this as,
\[A^{[l+1]} = \begin{bmatrix}
I_{l+1:L, l+1:L} & \\
 & \hat{A}^{l}
\end{bmatrix}
\begin{bmatrix}
R_{l+1:L,l+1:L} & R_{l+1:L,1:l} \\
& I_{1:l,1:l}
\end{bmatrix}\]
and focus only on $\hat{A}^l$ (trailing matrix).

Let $p$ be a subset of the top ND separator (in dark grey) in \Cref{int} at the interface between two interiors (that have been eliminated) and let $n$ be all the nodes it's connected to ($\hat{A}^l_{np} \ne 0)$. Consider the submatrix of $\hat{A}^l$ corresponding to this interface $p$,
\[ \hat{A}^l_p = \begin{bmatrix}
\hat{A}^l_{pp} & \hat{A}^l_{pn} \\
\hat{A}^l_{np} & \hat{A}^l_{nn}
\end{bmatrix}\]
We work on the assumption that the off-diagonal blocks $\hat{A}_{np}^{l}$,  $\hat{A}_{pn}^l$ corresponding to an interface are low rank. We begin by computing a rank-revealing factorization of $\begin{bmatrix}
\hat{A}_{np}^{lT} & \sigma\hat{A}_{pp}^{lT}\hat{A}^l_{pn}
\end{bmatrix}$, for a constant $\sigma$ to be defined later. The two terms in the rank-revealing factorization are necessary for specific reasons. The first term $\hat{A}_{np}^{lT}$ is present to decouple a part of the interface $p$ from $n$. The second term $\sigma\hat{A}_{pp}^{lT}\hat{A}^l_{pn}$ ensures that the structure of the elimination tree is not broken by the sparsification. Since, the fill-in guarantees are directly related to the elimination tree, this ensures that we do not introduce additional non-zeros in the matrix as the algorithm proceeds. Alternately, we can think of it as finding an orthogonal transformation such that a subset of $p$ is decoupled from $n$ both during QR on $A$ and Cholesky on $A^TA$. More discussion on this connection to Cholesky is given in subsection \Cref{Related_chol}.

Begin by computing a low-rank approximation of,
\[\begin{bmatrix}
\hat{A}_{np}^{lT} & \sigma\hat{A}_{pp}^{lT}\hat{A}^l_{pn}
\end{bmatrix}  = Q_{pp}W_{pn} = \begin{bmatrix}
Q_{pf} & Q_{pc}
\end{bmatrix}\begin{bmatrix}
W_{fn} \\
W_{cn}
\end{bmatrix} \text{with } \|W_{fn}\|_{_2}=\mathcal{O}(\epsilon)\]
where, $\sigma$ is a scalar that will be defined later in \Cref{spars_s}. This gives us, 
\[ \begin{bmatrix}
\hat{A}^l_{pp} & \hat{A}^l_{pn} \\
\hat{A}^l_{np} & \hat{A}^l_{nn}
\end{bmatrix} \begin{bmatrix}
Q_{pp} & \\
& I
\end{bmatrix} = \begin{bmatrix}
\hat{A}^l_{ff} & \hat{A}^l_{fc}  & \hat{A}^l_{fn}  \\
\hat{A}^l_{cf}  & \hat{A}^l_{cc}  & \hat{A}^l_{cn}  \\
\mathcal{O}(\epsilon)  & W_{cn}^T & \hat{A}^l_{nn}
\end{bmatrix} \text{ where, } \hat{A}_{pn}^l = \begin{bmatrix}
\hat{A}^l_{fn} \\
\hat{A}^l_{cn}
\end{bmatrix}\]

The orthogonal transformation $Q$ splits the nodes in interface $p$ into `fine' $f$ and `coarse' $c$ nodes. Ignoring the $\mathcal{O}(\epsilon)$ terms and applying a block Householder transform on the columns of the $f$ block, 
\begin{align*}
   \begin{bmatrix}
H_{pf}^T & \\
& I
\end{bmatrix}\begin{bmatrix}
\hat{A}^l_{pp} & \hat{A}^l_{pn} \\
\hat{A}^l_{np} & \hat{A}^l_{nn}
\end{bmatrix} \begin{bmatrix}
Q_{pp} & \\
& I
\end{bmatrix} &= \begin{bmatrix}
R_{ff} & R_{fc} & \mathcal{O}(\epsilon)\\
& \Tilde{A}_{cc}^{l} & \Tilde{A}_{cn}^l \\
& W_{cn}^T & \hat{A}^l_{nn}
\end{bmatrix} \\
&= \begin{bmatrix}
I_f & & \\
& \Tilde{A}_{cc}^{l} & \Tilde{A}_{cn}^l \\
& W_{cn}^T & \hat{A}^l_{nn}
\end{bmatrix} \begin{bmatrix}
R_{ff} & R_{fc} & \mathcal{O}(\epsilon)\\
& I_c & \\ 
& & I_n
\end{bmatrix} 
\end{align*}

The $\mathcal{O}(\epsilon)$ terms are dropped. With this, the fine nodes are disconnected from the rest. Hence, the number of nodes in the interface $p$ has been reduced by $|f|$. In other words, interface $p$ has been sparsified.  We can once again focus on the trailing matrix and continue the algorithm. 

Following this procedure, we can sparsify all the remaining interfaces.  Detailed proofs (like why $R_{fn} = \mathcal{O}(\epsilon)$ and its significance) and discussion on why the sparsification  does not affect the elimination tree ordering (and hence the fill-in guarantees that come with it) are given in \Cref{spars_s}.

\begin{algorithm}
\caption{High level spaQR algorithm}
\begin{algorithmic}[1]
  \REQUIRE {Sparse matrix A, Maximum level L, Tolerance $\epsilon$}
   \STATE {Compute column and row partitioning of A, infer separators and interfaces (see \Cref{ord_clus})}
   \FORALL{$l=L, L-1, \dots 1$}
   \FORALL{Interiors $\mathcal{I}$ at level $l$}
        \STATE {Factorize $\mathcal{I}$ using block Householder (see \Cref{sparseQR_S})}
   \ENDFOR
   \FORALL{Interfaces $\mathcal{S}$ between interiors}
        \STATE {Sparsify $\mathcal{S}$ using tolerance $\epsilon$ (see \Cref{spaQR_s} and \Cref{spars_s})}
   \ENDFOR
   \ENDFOR
\end{algorithmic}
\label{highlevel_Algo}
\end{algorithm}

The spaQR algorithm alternates between factorization of the interiors at a level $l$ and sparsifying the interfaces at all levels $l'< l$. \Cref{highlevel_Algo} gives the high-level overview of spaQR. In the next few sections, we provide a detailed explanation on row/column reordering, defining interfaces, interior factorization and interface sparsification.

\subsection{Ordering and Clustering}
\label{ord_clus}

As we discussed earlier, Nested Dissection on the graph of $A^TA$ ($G_{A^TA}$) can be used to define the separators, which provides a column ordering for the matrix $A$. However, the cost of forming $A^TA$ is $\mathcal{O}(N^3)$ and is not preferred. Instead we use a hypergraph based partitioning technique that uses only the structure of $A$. The algorithm referred to as hypergraph-based unsymmetric nested dissection (HUND) developed in~\cite{Grigori_hypergraph-basedunsymmetric} is used for partitioning general matrices. Partitioning of hypergraphs is a well-studied problem and there are multiple software options like PaToH~\cite{atalyrek2011PaToHT}, hMetis~\cite{Karypis1998HmetisAH} and Zoltan~\cite{ZoltanHypergraphIPDPS06} to do the same. The problem of finding vertex separators in $A^TA$ is equivalent to finding hyperedge separators in $A$ as shown in~\cite{Catalyurek_hypergraph-partitioningbased, Grigori_hypergraph-basedunsymmetric, atalyrek1999HypergraphMF}. 

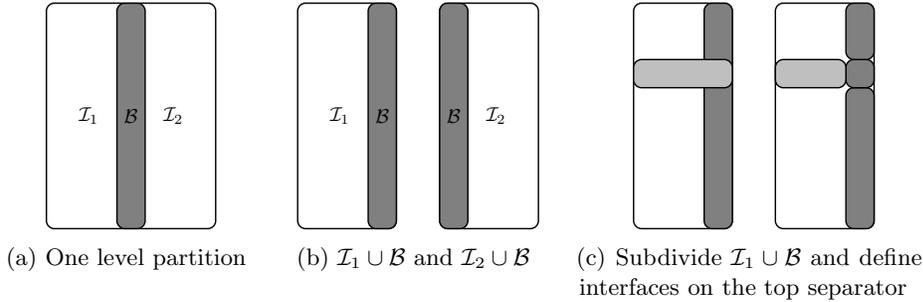
\begin{figure}[tbhp]
    \centering
    \begin{subfigure}[t]{0.25\textwidth}
        \centering
        \scalebox{0.75}{
        \begin{tikzpicture}
            \filldraw[fill = white, rounded corners, line width = 0.25mm] (0, 0) rectangle (3, 4) {};
            \filldraw[fill = gray, rounded corners, line width= 0.25mm] (1.25, 0) rectangle (1.75, 4) {};
            \node at (0.75,2) {$\mathcal{I}_1$};
            \node at (1.5,2) {$\mathcal{B}$};
            \node at (2.25,2) {$\mathcal{I}_2$};
        \end{tikzpicture}}
        \caption{One level partition}
    \end{subfigure}%
   ~
    \begin{subfigure}[t]{0.3\textwidth}
        \centering
        \scalebox{0.75}{
        \begin{tikzpicture}
            \filldraw[fill = white, rounded corners, line width = 0.25mm] (0, 0) rectangle (1.75, 4) {};
            \filldraw[fill = gray, rounded corners, line width= 0.25mm] (1.25, 0) rectangle (1.75, 4) {};
            \node at (0.75,2) {$\mathcal{I}_1$};
            \node at (1.5,2) {$\mathcal{B}$};
        \end{tikzpicture}
        }
        \quad
        \scalebox{0.75}{
        \begin{tikzpicture}
            \filldraw[fill = white, rounded corners, line width = 0.25mm] (0, 0) rectangle (1.75, 4) {};
            \filldraw[fill = gray, rounded corners, line width= 0.25mm] (0, 0) rectangle (0.5, 4) {};
            \node at (0.25,2) {$\mathcal{B}$};
            \node at (1,2) {$\mathcal{I}_2$};
        \end{tikzpicture}}
        \caption{$\mathcal{I}_1 \cup \mathcal{B}$ and $\mathcal{I}_2 \cup \mathcal{B}$ }
    \end{subfigure}%
   ~
    \begin{subfigure}[t]{0.35\textwidth}
        \centering
        \scalebox{0.75}{
        \begin{tikzpicture}
            \filldraw[fill = white, rounded corners, line width = 0.25mm] (0, 0) rectangle (1.75, 4) {};
            \filldraw[fill = gray, rounded corners, line width= 0.25mm] (1.25, 0) rectangle (1.75, 4) {};
            \filldraw[fill= lightgray, line width =0.25mm, rounded corners] (0, 2.5) rectangle (1.75, 3) {};
        \end{tikzpicture}}
        \quad
        \scalebox{0.75}{
        \begin{tikzpicture}
            \filldraw[fill = white, rounded corners, line width = 0.25mm] (0, 0) rectangle (1.75, 4) {};
            \filldraw[fill = gray, rounded corners, line width= 0.25mm] (1.25, 0) rectangle (1.75, 2.5) {};
            \filldraw[fill = gray, rounded corners, line width= 0.25mm] (1.25, 2.5) rectangle (1.75, 3) {};
            \filldraw[fill =gray, rounded corners, line width= 0.25mm] (1.25, 3) rectangle (1.75, 4) {};
            \filldraw[fill= lightgray, line width =0.25mm, rounded corners] (0, 2.5) rectangle (1.25, 3) {};
        \end{tikzpicture}}
        \caption{Subdivide $\mathcal{I}_1 \cup \mathcal{B}$ and define interfaces on the top separator}
    \end{subfigure}
    \caption{The first figure shows a one level partition of an arbitrary graph (hypergraph) using nested dissection (HUND). The next two figures depict the process of identifying the interfaces by subdividing $\mathcal{I}_1 \cup \mathcal{B}$.}
    \label{mnd}
\end{figure}

However, in addition to defining separators, we need a clustering of the unknowns in a separator to define interfaces. In SpaND~\cite{2019arXiv190102971C}, the technique of modified nested dissection is developed to find the interfaces. This is done by keeping track of the boundary $\mathcal{B}$ of each interior $\mathcal{I}$ in the dissection process.  Then instead of recursively subdividing $\mathcal{I}$, the recursion is done on $\mathcal{I}\cup \mathcal{B}$.  One level of this process is shown in \Cref{mnd}. Note how subdividing $\mathcal{I}_1 \cup \mathcal{B}$ helps identify the interfaces.  This process is defined as Modified Nested Dissection(MND) in~\cite{2019arXiv190102971C}. \Cref{mnd_multilvl} shows the application of MND to do a three level partitioning of an arbitrary graph. We refer the readers to Algorithm 2.2 of~\cite{2019arXiv190102971C} for details on the implementation of MND. Conceptually, this idea extends to hypergraph based partitioning and we adopt this in this work. 

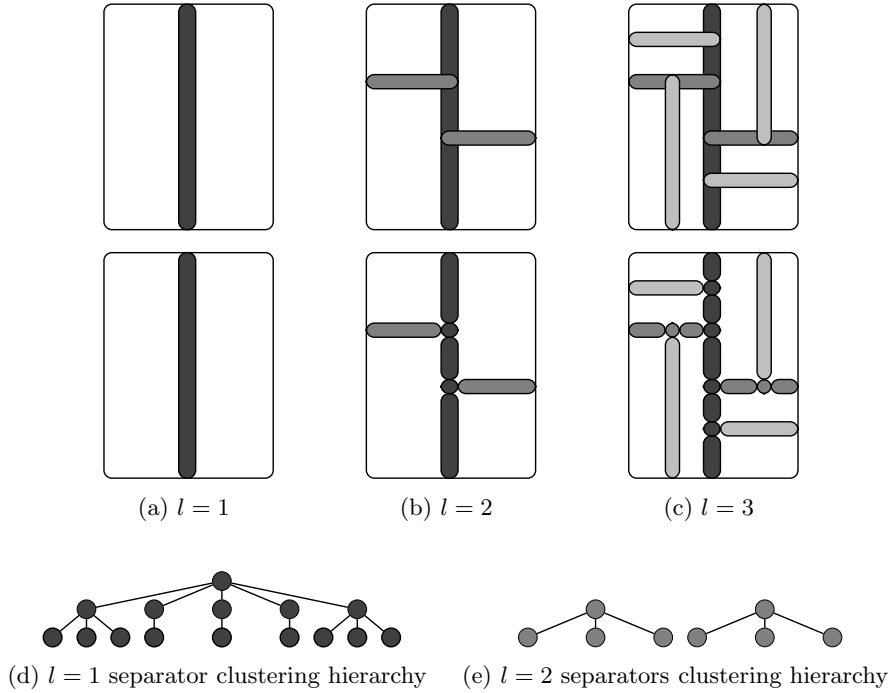
\begin{figure}[tbhp]
    \centering
    \begin{subfigure}[t]{0.25\textwidth}
        \centering
        \scalebox{0.75}{
        \begin{tikzpicture}
            \filldraw[fill = white, rounded corners, line width = 0.25mm] (0, 0) rectangle (3, 4) {};
            \filldraw[fill = darkgray, rounded corners, line width= 0.25mm] (1.325, 0) rectangle (1.625, 4) {};
        \end{tikzpicture}}
    \end{subfigure}%
   ~
    \begin{subfigure}[t]{0.25\textwidth}
        \centering
        \scalebox{0.75}{
        \begin{tikzpicture}
            \filldraw[fill = white, rounded corners, line width = 0.25mm] (0, 0) rectangle (3, 4) {};
            \filldraw[fill = darkgray, rounded corners, line width= 0.25mm] (1.325, 0) rectangle (1.625, 4) {};
            \filldraw[fill= gray, line width =0.25mm, rounded corners] (0, 2.5) rectangle (1.625, 2.75) {};
            \filldraw[fill= gray, line width =0.25mm, rounded corners] (1.325, 1.5) rectangle (3, 1.75) {};
        \end{tikzpicture}}
    \end{subfigure}%
   ~
    \begin{subfigure}[t]{0.25\textwidth}
        \centering
        \scalebox{0.75}{
        \begin{tikzpicture}
            \filldraw[fill = white, rounded corners, line width = 0.25mm] (0, 0) rectangle (3, 4) {};
            \filldraw[fill = darkgray, rounded corners, line width= 0.25mm] (1.325, 0) rectangle (1.625, 4) {};
            \filldraw[fill= gray, line width =0.25mm, rounded corners] (0, 2.5) rectangle (1.625, 2.75) {};
            \filldraw[fill= gray, line width =0.25mm, rounded corners] (1.325, 1.5) rectangle (3, 1.75) {};
            \filldraw[fill= lightgray, line width =0.25mm, rounded corners] (0.65, 0) rectangle (0.9, 2.75) {};
            \filldraw[fill= lightgray, line width =0.25mm, rounded corners] (0, 3.25) rectangle (1.625, 3.5) {};
            \filldraw[fill= lightgray, line width =0.25mm, rounded corners] (2.275, 1.5) rectangle (2.525, 4) {};
            \filldraw[fill= lightgray, line width =0.25mm, rounded corners] (1.325, 0.75) rectangle (3, 1) {};
        \end{tikzpicture}}
    \end{subfigure}%
    \vspace{0.25cm}
    
    \begin{subfigure}[t]{0.25\textwidth}
        \centering
        \scalebox{0.75}{
        \begin{tikzpicture}
            \filldraw[fill = white, rounded corners, line width = 0.25mm] (0, 0) rectangle (3, 4) {};
            \filldraw[fill = darkgray, rounded corners, line width= 0.25mm] (1.325, 0) rectangle (1.625, 4) {};
        \end{tikzpicture}}
        \caption{$l=1$}
    \end{subfigure}%
   ~
    \begin{subfigure}[t]{0.25\textwidth}
        \centering
        \scalebox{0.75}{
        \begin{tikzpicture}
            \filldraw[fill = white, rounded corners, line width = 0.25mm] (0, 0) rectangle (3, 4) {};
            \filldraw[fill = darkgray, rounded corners, line width= 0.25mm] (1.325, 0) rectangle (1.625, 1.5) {};
            \filldraw[fill = darkgray, rounded corners, line width= 0.25mm] (1.325, 1.5) rectangle (1.625, 1.75) {};
            \filldraw[fill = darkgray, rounded corners, line width= 0.25mm] (1.325, 1.75) rectangle (1.625, 2.5) {};
            \filldraw[fill = darkgray, rounded corners, line width= 0.25mm] (1.325, 2.5) rectangle (1.625, 2.75) {};
            \filldraw[fill = darkgray, rounded corners, line width= 0.25mm] (1.325, 2.75) rectangle (1.625, 4) {};
            \filldraw[fill= gray, line width =0.25mm, rounded corners] (0, 2.5) rectangle (1.325, 2.75) {};
            \filldraw[fill= gray, line width =0.25mm, rounded corners] (1.625, 1.5) rectangle (3, 1.75) {};
        \end{tikzpicture}}
        \caption{$l=2$}
    \end{subfigure}%
   ~
    \begin{subfigure}[t]{0.25\textwidth}
        \centering
        \scalebox{0.75}{
        \begin{tikzpicture}
            \filldraw[fill = white, rounded corners, line width = 0.25mm] (0, 0) rectangle (3, 4) {};
           \filldraw[fill = darkgray, rounded corners, line width= 0.25mm] (1.325, 0) rectangle (1.625, 0.75) {};
           \filldraw[fill = darkgray, rounded corners, line width= 0.25mm] (1.325, 0.75) rectangle (1.625, 1) {};
           \filldraw[fill = darkgray, rounded corners, line width= 0.25mm] (1.325, 1) rectangle (1.625, 1.5) {};
            \filldraw[fill = darkgray, rounded corners, line width= 0.25mm] (1.325, 1.5) rectangle (1.625, 1.75) {};
            \filldraw[fill = darkgray, rounded corners, line width= 0.25mm] (1.325, 1.75) rectangle (1.625, 2.5) {};
            \filldraw[fill = darkgray, rounded corners, line width= 0.25mm] (1.325, 2.5) rectangle (1.625, 2.75) {};
            \filldraw[fill = darkgray, rounded corners, line width= 0.25mm] (1.325, 2.75) rectangle (1.625, 3.25) {};
            \filldraw[fill = darkgray, rounded corners, line width= 0.25mm] (1.325, 3.25) rectangle (1.625, 3.5) {};
            \filldraw[fill = darkgray, rounded corners, line width= 0.25mm] (1.325, 3.5) rectangle (1.625, 4) {};
            \filldraw[fill= gray, line width =0.25mm, rounded corners] (0, 2.5) rectangle (0.65, 2.75) {};
            \filldraw[fill= gray, line width =0.25mm, rounded corners] (0.65, 2.5) rectangle (0.9, 2.75) {};
            \filldraw[fill= gray, line width =0.25mm, rounded corners] (0.9, 2.5) rectangle (1.325, 2.75) {};
            \filldraw[fill= gray, line width =0.25mm, rounded corners] (1.625, 1.5) rectangle (2.275, 1.75) {};
            \filldraw[fill= gray, line width =0.25mm, rounded corners] (2.275, 1.5) rectangle (2.525, 1.75) {};
            \filldraw[fill= gray, line width =0.25mm, rounded corners] (2.525, 1.5) rectangle (3, 1.75) {};
            \filldraw[fill= lightgray, line width =0.25mm, rounded corners] (0.65, 0) rectangle (0.9, 2.5) {};
            \filldraw[fill= lightgray, line width =0.25mm, rounded corners] (0, 3.25) rectangle (1.325, 3.5) {};
            \filldraw[fill= lightgray, line width =0.25mm, rounded corners] (2.275, 1.75) rectangle (2.525, 4) {};
            \filldraw[fill= lightgray, line width =0.25mm, rounded corners] (1.625, 0.75) rectangle (3, 1) {};
        \end{tikzpicture}}
        \caption{$l=3$}
    \end{subfigure}
    
    \vspace{0.25cm}
    
    \begin{subfigure}[t]{0.45\textwidth}
        \centering
        \scalebox{0.75}{
        \begin{tikzpicture}[level distance=0.5cm,
  level 1/.style={sibling distance=1.2cm},
  level 2/.style={sibling distance=0.6cm},
  edge from parent/.append style = {line width = 0.25mm}]
  \node (Root) [circle, draw=black, fill=darkgray] at (0,0) {}
    child {node [circle, draw=black, fill=darkgray]{}
        child {node [circle, draw=black, fill=darkgray]{}}
        child {node [circle, draw=black, fill=darkgray]{}}
        child {node [circle, draw=black, fill=darkgray]{}}
    }
    child {node [circle, draw=black, fill=darkgray]{}
        child {node [circle, draw=black, fill=darkgray]{}}
        }
    child {node [circle, draw=black, fill=darkgray]{} 
        child {node [circle, draw=black, fill=darkgray]{}}
        }
    child {node [circle, draw=black, fill=darkgray]{} 
        child {node [circle, draw=black, fill=darkgray]{}}
        }
    child {node [circle, draw=black, fill=darkgray]{}
        child {node [circle, draw=black, fill=darkgray]{}}
        child {node [circle, draw=black, fill=darkgray]{}}
        child {node [circle, draw=black, fill=darkgray]{}}
    };
    \end{tikzpicture}}
        \caption{$l=1$ separator clustering hierarchy}
        \label{lvl1_ch}
    \end{subfigure}%
   ~
    \begin{subfigure}[t]{0.45\textwidth}
        \centering
        \scalebox{0.75}{
        \begin{tikzpicture}[level distance=0.5cm,
  level 1/.style={sibling distance=1.2cm},
  edge from parent/.append style = {line width = 0.25mm}]
  \node (Root) [circle, draw=black, fill=gray] at (0, -0.5){}
        child {node [circle, draw=black, fill=gray]{}}
        child {node [circle, draw=black, fill=gray]{}}
        child {node [circle, draw=black, fill=gray]{}};
    \node (Root2) [circle, draw=black, fill=gray] at (3, -0.5){}
        child {node [circle, draw=black, fill=gray]{}}
        child {node [circle, draw=black, fill=gray]{}}
        child {node [circle, draw=black, fill=gray]{}};
    \end{tikzpicture}}
        \caption{$l=2$ separators clustering hierarchy}
        \label{lvl2_ch}
    \end{subfigure}
    \caption{The first row depicts the creation of separators by recursive application of modified nested dissection. The second row shows the creation of interfaces in each separator. The last row shows the clustering hierarchy within each separator.}
    \label{mnd_multilvl}
\end{figure}
Modified Nested dissection on $A^TA$ or modified HUND on $A$ defines the separators/interfaces. The columns of the matrix are reordered following the ND/HUND ordering. The rows of the matrix are reordered after column ordering and clustering is done. Row ordering has to be done such that the off-diagonal blocks are low rank and the diagonal blocks are full rank. 

We employ a different heuristics to assign the rows to the clusters. For diagonally dominant matrices, the reordering of the rows can be the same as the columns. For general matrices, one heuristic is to identify the cluster such that the weight of the row in that cluster is maximized. In other words, row $r_i$ is assigned to cluster $c$ where $c = \arg \max_{c_k}\sum_{j \in c_k} A_{ij}^2$. However, this can lead to too many rows assigned to a single cluster resulting in rectangular diagonal blocks. Typically, we want to avoid this situation as we want all the diagonal blocks to be full rank.

Another heuristic is to permute large entries to the diagonal of the matrix. This is done by performing a bipartite matching between the rows and the columns of the matrix. We use the MC64 routine from the HSL Mathematical Software Library~\cite{hsl_mc64} to perform the matching. One can test the performance with different heuristics and choose the best one for their problem. 

\subsection{Householder QR on Separators}
\label{sparseQR_S}

The factorization of interiors or separators at a level $l$ is done by applying a block Householder step (regular sparse QR). Here, we describe the QR factorization of a separator $s$  reinterpreted in our notation. Let $s$ be the separator of interest, $n$ be all its neighbors (i.e, $(A^TA)_{ns} \ne 0)$ and $w$ be the rest of the nodes disconnected from $s$ in the graph of $A^TA$. Let nodes in $n$ be further categorized into $n=\{n_1, n_2, n_3 \}$. Nodes $n_1$ are such that $A_{n_1s} \ne 0$, while $A_{sn_1}$ may or may not be zero. Nodes $n_2$ are such that $A_{n_2s} = 0$ and $A_{sn_2} \ne 0$ and nodes $n_3$ are such that $A_{n_1n_3} \ne 0$, $A_{sn_3}=0$ and $A_{n_3s}=0$. All such nodes $n$ will correspond to $(A^TA)_{ns} \ne 0$.  Consider the matrix A blocked in the following form, 
\[ A = \begin{bmatrix}
A_{ss} & A_{sn_1} & A_{sn_2} & & \\
A_{n_1s} & A_{n_1n_1} & & A_{n_1n_3} & \\
 & A_{n_2n_1} &  A_{n_2n_2} & A_{n_2n_3} & A_{n_2w} \\
  & A_{n_3n_1} &  A_{n_3n_2} & A_{n_3n_3} & A_{n_3w} \\
& A_{wn_1} & A_{wn_2} & A_{wn_3} & A_{ww}
\end{bmatrix} \]

All the diagonal blocks are square as explained in the previous section. Consider the block Householder matrix $H$ such that, 
\[ H^T \begin{bmatrix}
A_{ss} \\
A_{n_1s}
\end{bmatrix}  = \begin{bmatrix}
R_{ss} \\
\\
\end{bmatrix}\]
where $R_{ss} \in \mathbb{R}^{|s|\times |s|}$ is upper triangular. Define, 
\[H_s = 
  \begin{bmatrix}
H & \\
& I
\end{bmatrix}\] Then, \[ H_s^{T}A = \begin{bmatrix}
R_{ss} & R_{sn_1} & R_{sn_2} & R_{sn_3} & \\
& \Tilde{A}_{n_1n_1} & \Tilde{A}_{n_1n_2}& \Tilde{A}_{n_1n_3} & \\
& A_{n_2n_1} &  A_{n_2n_2} & A_{n_2n_3} & A_{n_2w} \\
  & A_{n_3n_1} &  A_{n_3n_2} & A_{n_3n_3} & A_{n_3w} \\
& A_{wn_1} & A_{wn_2} & A_{wn_3} & A_{ww}
\end{bmatrix} = \begin{bmatrix}
R_{ss} & R_{sn} & \\
& \Tilde{A}_{nn}& A_{nw} \\
& A_{wn} & A_{ww}
\end{bmatrix} \]
Define, \[R_s = \begin{bmatrix}
R_{ss} & R_{sn} & \\
& I_n & \\
& & I_w
\end{bmatrix} \]
Then, 
\[H_s^{T} A R_s^{-1} = \begin{bmatrix}
I_s &  & \\
& \Tilde{A}_{nn} & {A}_{nw} \\
& A_{wn} & A_{ww}
\end{bmatrix} 
\]
Hence the cluster $s$ has been disconnected from the rest. In this process we have introduced fill-in only between the neighbors $n$. There are no additional non-zeros in the blocks involving $w$ ($A_{nw}$, $A_{wn}$, and $A_{ww}$). This is key in the ND ordering.

\subsection{Sparsification of Interfaces}
\label{spars_s}
Once the interiors/separators at a level $l$ have been factorized, the algorithm goes through each interface and sparsifies it. Consider an interface $p$, 
\[A = 
\begin{bmatrix}
A_{pp} & A_{pn} &  \\
A_{np} & A_{nn} & A_{nw} \\
& A_{wn} & A_{ww}
\end{bmatrix}\]
Assume the off-diagonal blocks $A_{np}$ and $A_{pn}$ are low-rank. Hence, the matrix $\begin{bmatrix}
A_{np}^T & \sigma A_{pp}^TA_{pn}
\end{bmatrix}$ can be well-approximated by a low rank matrix (for a scalar $\sigma$ to be defined later).
\[\begin{bmatrix}
A_{np}^{T} & \sigma A_{pp}^{T}A_{pn}
\end{bmatrix}  = Q_{pp}W_{pn} = \begin{bmatrix}
Q_{pf} & Q_{pc}
\end{bmatrix}\begin{bmatrix}
W_{fn} \\
W_{cn}
\end{bmatrix} \text{with } \|W_{fn}\|_{_2}=\mathcal{O}(\epsilon)\]
\[\begin{bmatrix}
W_{fn}\\
W_{cn}
\end{bmatrix} = \begin{bmatrix}
W_{fn}^{(1)} & W_{fn}^{(2)} \\
W_{cn}^{(1)} & W_{cn}^{(2)}
\end{bmatrix} \]
Then, 
\[A_{np}Q_{pc} = W_{cn}^{(1)T}, \text{ } A_{np}Q_{pf} = W_{fn}^{(1)T} = \mathcal{O}(\epsilon)\]
Define, \[Q_p = \begin{bmatrix}
Q_{pp} & & \\
 & I & \\
 & & I
\end{bmatrix}\]
\[ AQ_p = \begin{bmatrix}
\Tilde{A}_{ff} & \Tilde{A}_{fc} & A_{fn}  & \\
 \Tilde{A}_{cf} & \Tilde{A}_{cc} & A_{cn} &\\
 \mathcal{O}(\epsilon) & W_{cn}^{(1)T} & A_{nn} & A_{nw} \\ 
 & & A_{wn} & A_{ww}
\end{bmatrix} \quad \text{where,} \quad 
A_{pn} = \begin{bmatrix}
A_{fn} \\
A_{cn}
\end{bmatrix}\]
where $\Tilde{A}_{ff}$ is a square block of size $|f| \times |f|$. Dropping the $\mathcal{O}(\epsilon)$ and applying a block Householder $H_f$ on the $f$ block, (see \Cref{sparseQR_S}),
\[ H_f = \begin{bmatrix}
H & \\
& I
\end{bmatrix}\] where $H \in \mathbb{R}^{|p|\times |p|}$. If $H_{ff}$ represent the first $|f|$ columns of $H$, then $H_{ff}^T (AQ_p)_{(:,1:f)} = R_{ff}$
\[H_f^T A Q_p = \begin{bmatrix}
R_{ff} & R_{fc} & \r{R_{fn}} & \\
& \hat{A}_{cc} & \hat{A}_{cn} & \\
& W_{cn}^{(1)T} & A_{nn} & A_{nw} \\
& & A_{wn} & A_{ww}
\end{bmatrix} \]
The term $R_{fn}=\mathcal{O}(\epsilon)$ for an appropriate choice of the scalar $\sigma$. The value of $\sigma$ for which this is true is given by \Cref{lemma1}. The proof is given in \Cref{proof:lem1}. 
\begin{restatable}{lemma}{sigchoice} \label{lemma1}
$\|R_{fn}\|_{_2} \leq \epsilon$, for $\sigma = \frac{1}{\sigma_{\text{min}}(A_p)}$ where $A_p = \begin{bmatrix}
A_{pp} \\
A_{np}
\end{bmatrix}$ 
\end{restatable}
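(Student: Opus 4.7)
The plan is to derive a closed-form expression for $R_{fn}$ using the structure of the block Householder step, and then bound the resulting factors using the low-rank approximation identity.

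The block Householder $H$ is defined so that $H^T A_{pp} Q_{pf} = \begin{bmatrix} R_{ff} \\ 0 \end{bmatrix}$ with $R_{ff}$ upper triangular. Partitioning $H = [H_1, H_2]$ with $H_1 \in \mathbb{R}^{|p|\times|f|}$ having orthonormal columns, this gives $H_1 R_{ff} = A_{pp} Q_{pf}$, hence $H_1 = A_{pp} Q_{pf} R_{ff}^{-1}$. Applying $H^T$ to the $n$-column block, the top $|f|$ rows are
\[ R_{fn} \;=\; H_1^T A_{pn} \;=\; R_{ff}^{-T}\, Q_{pf}^T A_{pp}^T A_{pn}. \]
Taking spectral norms, $\|R_{fn}\|_2 \leq \|R_{ff}^{-1}\|_2 \cdot \|Q_{pf}^T A_{pp}^T A_{pn}\|_2$.

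For the right factor, restrict the low-rank identity $Q_{pp}^T [A_{np}^T \mid \sigma A_{pp}^T A_{pn}] = W_{pn}$ to the first $|f|$ rows to obtain $Q_{pf}^T A_{np}^T = W_{fn}^{(1)}$ and $\sigma\, Q_{pf}^T A_{pp}^T A_{pn} = W_{fn}^{(2)}$. Since $\|W_{fn}\|_2 \leq \epsilon$, both subblocks are bounded by $\epsilon$, so $\|Q_{pf}^T A_{pp}^T A_{pn}\|_2 \leq \epsilon/\sigma$. For the left factor, orthonormality of $H_1$ yields $\sigma_{\min}(R_{ff}) = \sigma_{\min}(A_{pp}Q_{pf})$. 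Compare this to $\sigma_{\min}(A_p)$ by writing
\[ A_p Q_{pf} = \begin{bmatrix} A_{pp} Q_{pf} \\ A_{np} Q_{pf} \end{bmatrix}, \]
observing $\sigma_{\min}(A_p Q_{pf}) \geq \sigma_{\min}(A_p)$ (since $Q_{pf}$ has orthonormal columns) and $\|A_{np} Q_{pf}\|_2 = \|W_{fn}^{(1)T}\|_2 \leq \epsilon$. Hence $\sigma_{\min}(A_{pp}Q_{pf}) \geq \sigma_{\min}(A_p)$ to leading order, giving $\|R_{ff}^{-1}\|_2 \leq \sigma$. Combining, $\|R_{fn}\|_2 \leq \sigma \cdot (\epsilon/\sigma) = \epsilon$ for the chosen $\sigma = 1/\sigma_{\min}(A_p)$.

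The main obstacle is the final comparison $\sigma_{\min}(A_{pp}Q_{pf}) \geq \sigma_{\min}(A_p)$. A direct Weyl-type argument on $Q_{pf}^T(A_{pp}^T A_{pp} + A_{np}^T A_{np})Q_{pf}$ actually only yields $\sigma_{\min}(A_{pp}Q_{pf})^2 \geq \sigma_{\min}(A_p)^2 - \epsilon^2$, leaving a higher-order $O(\epsilon^2)$ correction. I expect the formal proof either to absorb this into the $\epsilon$ bound (justified since $\|W_{fn}\|_2$ is already an order-$\epsilon$ quantity) or to invoke a tighter perturbation argument under the implicit assumption $\epsilon \ll \sigma_{\min}(A_p)$. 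Everything else is straightforward algebra using only the orthogonality of $Q_{pp}$ and $H$ and the rank-revealing identity.
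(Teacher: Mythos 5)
Your proof is correct and takes essentially the same route as the paper's: both derive $R_{fn} = \frac{1}{\sigma}R_{ff}^{-T}W_{fn}^{(2)}$ from the identity $A_{pp}Q_{pf} = H_{ff}R_{ff}$, bound $\|W_{fn}^{(2)}\|_2 \leq \epsilon$ via the rank-revealing factorization, and finish with $\sigma_{\min}(R_{ff}) \geq \sigma_{\min}(A_p)$ so that $\|R_{ff}^{-1}\|_2 \leq \sigma$. The $\mathcal{O}(\epsilon^2)$ correction you flag in that last comparison (from dropping $A_{np}Q_{pf}$ before the Householder step) is also glossed over in the paper, which simply writes $H_{ff}^T A_{pp}Q_{pf} = R_{ff} + \mathcal{O}(\epsilon)$ and asserts the inequality, so you are if anything slightly more explicit on that point.
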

Finally define, \[R_f = \begin{bmatrix}
R_{ff} & R_{fc} &  & \\
& I_c & & \\
& & I_n & \\
& & & I_w
\end{bmatrix}\]
to get,
\[H_f^T AQ_p R_f^{-1} = \begin{bmatrix}
I_f & & & \\
& \hat{A}_{cc} & \hat{A}_{cn} & \\
& W_{cn}^{(1)T} & A_{nn} & A_{nw} \\
& & A_{wn} & A_{ww}
\end{bmatrix}\]
Hence, the fine nodes $f$ are disconnected from all the remaining nodes. The size of interface $p$ is decreased by $|f|$. The $A_{nn}$, $A_{nw}$, $A_{wn}$, and $A_{ww}$ blocks are not affected during the sparsification process. Thus, we could eliminate a part of $p$ without introducing additional nonzeros in the rest of the matrix. Note that, the last two statements are true even if the term $R_{fn}$ was not $\mathcal{O}(\epsilon)$. 

However, it is important that $\|R_{fn}\|_{_2}\leq \epsilon$ to ensure that the elimination tree structure of $A^TA$ is not affected. Remember that the QR factorization on $A$ and Cholesky on $A^TA$ are directly related. Hence, we need to ensure that we have not introduced fill-in in the $n-n$, $n-w$, $w-w$ blocks of $A^TA$ as well.

To understand this better, consider two nodes $n_1$ and $n_2$ such that $n_1, n_2\in  n$ and belong to two disjoint subtrees of the elimination tree (of $A^TA$).  Then by definition, (see Corollary 3.2 in~\cite{elimination_tree}) $R_{n_1n_2} = 0$ during direct QR factorization on $A$.  However, say that $(A^TA)_{n_1n_2} \neq 0$ after sparsification of an interface in spaQR. This implies that an Householder transformation on the column $A_{:,n_1}$ will modify the column $A_{:,n_2}$, since the columns are not orthogonal \big($(A^TA)_{n_1n_2} \neq 0$\big). Ignoring any spurious cancellations that can occur, this leads to $R_{n_1n_2}\neq 0$. Thus, the fill-in guarantees that come with following the elimination tree ordering of the unknowns do not hold anymore. 

In \Cref{well_sep}, we show that sparsification does not affect the elimination tree of $A^TA$, that  is, any two disjoint subtrees of the elimination tree remain disjoint after sparsification of any interface. The proof depends on \Cref{lemma1} and is given in \Cref{proof:wellsep}.

\begin{restatable}{theorem}{wellsep}
\label{well_sep}
For any two interfaces $l$, $m$ such that the block  $R_{lm} = 0$ in the direct QR factorization, we have $R_{lm} \approx 0$ in spaQR as well. 
\end{restatable}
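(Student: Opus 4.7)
The plan is to track how a single sparsification step perturbs $A^TA$ off the elimination tree, and then induct over the levels. By the QR-Cholesky correspondence together with Corollary 3.2 of~\cite{elimination_tree}, the condition $R_{lm}=0$ in direct QR is equivalent to $l$ and $m$ sitting in disjoint subtrees of the elimination tree of $A^TA$, which in turn is read off from the zero pattern of $A^TA$. So it suffices to show that each sparsification preserves this zero pattern up to a small quantitative error; the claim $R_{lm}\approx 0$ then follows by the stability of Cholesky (and hence of QR).

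I would analyze a single sparsification of an interface $p$ with neighbors $n$ and remainder $w$, and write the sparsified matrix as $\tilde A = M - E$, where $M = H_f^{T}AQ_pR_f^{-1}$ is the exact intermediate and $E$ is supported on the $(f,n)$ block with $E_{fn}=R_{fn}$. Since the orthogonal $Q_p$ acts only on the $p$ columns, the triangular $R_f^{-1}$ acts only on the $f\cup c$ columns, and $H_f$ is orthogonal, one has $(M^TM)_{lm}=(Q_p^TA^TAQ_p)_{lm}=(A^TA)_{lm}$ for any $l,m\subseteq n\cup w$; if $l$ and $m$ lie in disjoint subtrees of the elimination tree of $A^TA$, the right-hand side vanishes by hypothesis.

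The spaQR Gram matrix then differs from the exact one by
\[(\tilde A^T\tilde A)_{lm}-(M^TM)_{lm} = -(M^TE)_{lm}-(E^TM)_{lm}+(E^TE)_{lm},\]
and a short direct computation using the identity $M_{f,n}=R_{fn}$ and the $(f,n)$-support of $E$ identifies each of these three terms with an $(l,m)$-submatrix of $\pm R_{fn}^TR_{fn}$. Since $\|R_{fn}\|_2=\mathcal{O}(\epsilon)$ by \Cref{lemma1}, the per-step perturbation of any off-tree entry of $A^TA$ is $\mathcal{O}(\epsilon^2)$; in particular, if $l$ or $m$ lies inside $w$, the $(l,m)$ block is untouched because $E$ has no $w$-support. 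Iterating over every interface and every level, the cumulative off-tree perturbation of the Gram matrix remains a mild constant times $\epsilon^2$.

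To close the argument I would invoke stability: a perturbation of size $\mathcal{O}(\epsilon^2)$ in the off-tree blocks of the Gram matrix produces $R_{lm}=\mathcal{O}(\epsilon)$ in the spaQR factor, which is the qualitative statement of the theorem. The main obstacle I anticipate is the bookkeeping across levels --- when $p$ is sparsified, the coarse block $c$ is promoted to the parent separator, so the inductive hypothesis has to be phrased in terms of the \emph{current} (post-sparsification) elimination tree, and one must verify that the ancestor/descendant relations needed for disjointness are inherited correctly by $c$ from $p$ so that the induction closes without a level-dependent blow-up.
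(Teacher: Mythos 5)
Your proposal follows essentially the same route as the paper's proof: you exploit the same key facts that the $(l,m)$ entry of the Gram matrix is preserved exactly (since $Q_p$ and $R_f^{-1}$ act only on the columns of $p$ and $H_f$ is orthogonal), that the dropped blocks are $\mathcal{O}(\epsilon)$ by \Cref{lemma1}, and that the resulting per-step off-tree perturbation is $\mathcal{O}(\epsilon^2)$, which is precisely the computation in the paper's argument. The final ``stability'' step you leave as an invocation is what the paper makes concrete via \Cref{lemma2} and \Cref{Coro1} (a Householder-vector argument showing QR on $l$ leaves the columns of $m$ essentially untouched when $A_{lm}$, $A_{ml}$ and $(A^TA)_{lm}$ are all $\mathcal{O}(\epsilon)$), and the cross-level bookkeeping you flag is treated only informally in the paper as well.
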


\subsection{Scaling of Interfaces}
\label{scale_s}

The $\sigma$ factor in the sparsification step was chosen to be $\sigma_{\text{min}}(A_p)^{-1}$. This factor was necessary to ensure that $R_{fn} = \mathcal{O}(\epsilon)$ in \Cref{lemma1}, which in turn was necessary to prove \Cref{well_sep}. However when $A_p$ (or $A$) is ill-conditioned, $\sigma$ can be large which will lead to a slower decay of the singular values of $\begin{bmatrix}
A_{np}^T & \sigma A_{pp}^TA_{pn}
\end{bmatrix}$.  Thus even if the off-diagonal blocks have a faster decay of singular values, we could not take full advantage of it. In addition to fixing this, we get improved accuracy by scaling the diagonal blocks corresponding to all interfaces before sparsification. This gives better error guarantees as shown in \Cref{scaling_err_s}. Similar rescaling ideas have been shown to improve accuracy in~\cite{2019arXiv190102971C,FeliuFab2018RecursivelyPH, Xia2017EffectiveAR} for sparse Cholesky factorization on hierarchical matrices. 

Consider an interface $p$ and its neighbors $n$,
\[A = \begin{bmatrix}
A_{pp} & A_{pn} \\
A_{np} & A_{nn}
\end{bmatrix}\]
Find the QR decomposition of $A_{pp}$; $A_{pp} = U_{pp} R_{pp}$. Then \[U_{pp}^T A_{pp} R_{pp}^{-1} = I\]
  Define, \[U_p = \begin{bmatrix}
 U_{pp}^T  &\\
 & I
 \end{bmatrix} \qquad R_p = \begin{bmatrix}
 R_{pp}^{-1} & \\
 & I_n
 \end{bmatrix}\]
 Then, 
 \[U_p^TAR_p = \begin{bmatrix}
 I_p & \Tilde{A}_{pn} \\
 \Tilde{A}_{np} & A_{nn}
 \end{bmatrix} \]
Similarly we scale the diagonal blocks corresponding to all the remaining interfaces. Once the interfaces are scaled, sparsification is straightforward; compress, \[ \begin{bmatrix}
 \Tilde{A}_{np}^T & \Tilde{A}_{pn}
 \end{bmatrix} = Q_{pp}W_{pn} = \begin{bmatrix}
Q_{pf} & Q_{pc}
\end{bmatrix}\begin{bmatrix}
W_{fn} \\
W_{cn}
\end{bmatrix} \quad \text{with} \quad \|W_{fn}\|_{_2}=\mathcal{O}(\epsilon) \]
Defining $Q_p$ as in \Cref{spars_s}, we find that sparsification and factorization of the `fine' nodes boils down to applying $Q_p$ on the left and right of the matrix. 
\[Q_p^T U_p^TAR_p Q_p = \begin{bmatrix}
 I_f & & \r{E_2} \\
 & I_c & \hat{A}_{cn}\\
 \r{E_1} & \hat{A}_{nc} & A_{nn}
\end{bmatrix}\]
where $\r{E_1} = W_{fn}^{(1)T}$, $\r{E_2} = W_{fn}^{(2)}$ and $\|E_1\|_{_2}\approx\|E_2\|_{_2}\leq \epsilon$. Since \Cref{lemma1} holds true, \Cref{well_sep} also holds. Hence, the algorithm can proceed without breaking the elimination tree structure.

\subsection{Merging of clusters}
\label{sec:merge}
Once the factorization of separators at a level is done, the interfaces of the remaining ND separators are merged following the cluster hierarchy. For example, in \Cref{mnd_multilvl}, once the leaves $l=4$ and the $l=3$ separators are factorized, the interfaces of the separators at $l=1,2$ are merged following the clustering hierarchy shown in \Cref{lvl1_ch}, \Cref{lvl2_ch}. Merging simply means combining the block rows and columns of the interfaces into a single block matrix. 

\subsection{Sparsified QR}

We now have all the building blocks to write down the spaQR algorithm. Given a matrix, we typically pre-process it so that the 2-norm of each column is a constant. Then the matrix is partitioned to identify separators, interfaces (\Cref{ord_clus}) and is appropriately reordered. The spaQR algorithm involves applying a sequence of block Householder factorizations $H_s, R_s$ (\Cref{sparseQR_S}), scaling $U_p, R_p$ (\Cref{scale_s}), sparsification of the interfaces $Q_p$ (\Cref{spars_s}), permutations to take care of the fine nodes and merging of the clusters (\Cref{sec:merge}), at each level $l$ such that, 
\[ Q^T A W^{-1} \approx I\] 
where,
\begin{align*}
    Q &= \prod_{l=1}^{L}\Bigg( \prod_{s\in S_l}H_s \prod_{p\in C_l}U_p \prod_{p\in C_l} Q_p\Bigg)\\
    W &= \prod_{l=L}^{1}\Bigg(\prod_{p\in C_l} Q_p^T\prod_{p\in C_l}R_p \prod_{s\in S_l}R_s  \Bigg)
\end{align*}
\begin{algorithm}
\caption{Sparsified QR (spaQR) algorithm}
\begin{algorithmic}[1]
  \REQUIRE {Sparse matrix A, Tolerance $\epsilon$}
   \STATE {Compute column and row partitioning of A, infer separators and interfaces (see \Cref{ord_clus})}
   \FORALL{$l=L, L-1, \dots 1$}
   \FORALL{separators $s$ at level $l$}
        \STATE {Factorize $s$ using block Householder (see \Cref{sparseQR_S})}
        \STATE {Append $H_s$ to $Q$ and $R_s$ to $W$}
   \ENDFOR
   \FORALL{interfaces $p$ remaining at level $l$}
        \STATE {Perform block diagonal scaling on  $p$ (see \Cref{scale_s})}
        \STATE{Append $U_p$ to $Q$ and $R_p$ to $W$}
   \ENDFOR
   \FORALL{interfaces $p$ remaining at level $l$}
        \STATE {Sparsify interface $p$ (see \Cref{spars_s}, \Cref{scale_s})}
        \STATE{Append $Q_p$ to $Q$ and $Q_p^T$ to $W$}
   \ENDFOR
   \FORALL{separators $s$ remaining at level $l$}
   \STATE {Merge interfaces of $s$ one level following the cluster hierarchy (see \Cref{sec:merge})}
   \ENDFOR
   \ENDFOR
  \RETURN {$Q = \prod_{l=1}^{L}\Bigg( \prod_{s\in S_l}H_s \prod_{p\in C_l}U_p \prod_{p\in C_l} Q_p\Bigg)$\\ 
  \qquad \qquad $W = \prod_{l=L}^{1}\Bigg(\prod_{p\in C_l} Q_p^T\prod_{p\in C_l}R_p \prod_{s\in S_l}R_s  \Bigg)$ such that $Q^TAW^{-1} \approx I$ }
  
\end{algorithmic}
\label{Algo: spaQR}
\end{algorithm}

Here, $S_l$ is the set of all separators at level $l$ in the elimination tree and $C_l$ is the set of all interfaces remaining after factorization of separators at level $l$. $Q$ is a product of orthogonal matrices and $W$ is a product of upper triangular and orthogonal matrices. Since, $Q$ and $W$ are available as sequence of elementary transformations, they are easy to invert. The complete algorithm is presented in \Cref{Algo: spaQR}.

\section{Theoretical results}
\label{theoretical_results_sec}

In this section, we  study the error introduced during the sparsification process, the effect of scaling and the effectiveness of using spaQR as a preconditioner with iterative methods. Finally, we discuss the theoretical complexity of the spaQR algorithm. 

\subsection{Error Analysis}
\label{Error_s}

Consider a simple $2\times 2$ block matrix A. 
\[A = \begin{bmatrix}
A_{pp} & A_{pn} \\
A_{np} & A_{nn}
\end{bmatrix}\]
After sparsification, interface $p$ is split into fine $f$ and coarse $c$ nodes,
\[AQ_p = 
\begin{bmatrix}
A_{ff} & A_{fc} & A_{fn}  \\
A_{cf} & A_{cc} & A_{cn} \\
\r{E} & A_{nc} & A_{nn}
\end{bmatrix}\]
where $\|\r{E}\|_{_2}\leq \epsilon$. After performing Householder QR on the $f$ columns,
\begin{align*}
  H_f^TA Q_p &= \begin{bmatrix}
R_{ff} & R_{fc} & \r{R_{fn}} \\
& \hat{A}_{cc} & \hat{A}_{cn} \\
\r{E} & A_{nc} & A_{nn}
\end{bmatrix} \\
&= \begin{bmatrix}
I_f &  & \r{R_{fn}} \\
& \hat{A}_{cc} & \hat{A}_{cn} \\
\r{E}R_{ff}^{-1} & A_{nc}-\r{E}R_{ff}^{-1}R_{fc} & A_{nn}
\end{bmatrix} \begin{bmatrix}
R_{ff} & R_{fc} & \\
& I_c &  \\
& & I_n
\end{bmatrix} 
\end{align*}
where $\|\r{R_{fn}}\|_{_2}\leq \epsilon$. Then, 
\[H_f^TAQ_pR_f^{-1} = \begin{bmatrix}
I_f &  & \r{R_{fn}} \\
& \hat{A}_{cc} & \hat{A}_{cn} \\
\r{E}R_{ff}^{-1} & A_{nc}-\r{E}R_{ff}^{-1}R_{fc} & A_{nn}
\end{bmatrix}\]
Define, 
\[H_f^T\Tilde{A}Q_pR_f^{-1} = \begin{bmatrix}
I_f &  &  \\
& \hat{A}_{cc} & \hat{A}_{cn} \\
 & A_{nc} & A_{nn}
\end{bmatrix}\]
as the approximation when $\r{E}$ and $\r{R_{fn}}$ are dropped in our algorithm. Then the error in the approximation is, 
\[H_f^T(A-\Tilde{A})Q_pR_f^{-1} =\begin{bmatrix}
  &  & \r{R_{fn}}   \\
&  & \\
\r{E}R_{ff}^{-1} & -\r{E}R_{ff}^{-1}R_{fc} &
\end{bmatrix} \]
\begin{align*}
    \|H_f^T(A-\Tilde{A})Q_pR_f^{-1}\|_{_2} & \leq c_1 \|\r{E}R_{ff}^{-1}R_{fc}\|_{_2}
    \leq c_1 \|E\|_{_2} \; \|R_{ff}^{-1}\|_{_2} \; \|R_{fc}\|_{_2} \\
    &\leq c_1\epsilon \; \frac{1}{\sigma_{\text{min}}(A_p)} \; \sigma_{\text{max}}(A_p)
    = c_1\kappa(A_p) \; \epsilon
\end{align*}
where $c_1$ is a constant. We have used that facts that, 
$$\begin{bmatrix}
R_{fc} \\
R_{cc} \\
A_{nc}
\end{bmatrix} = H_f^T \begin{bmatrix}
A_{fc} \\
A_{cc} \\
A_{nc}
\end{bmatrix}
\quad \text{and} \quad
\|R_{fc}\|_{_2} \leq \Bigg\|\begin{bmatrix}
A_{fc} \\
A_{cc} \\
A_{nc}
\end{bmatrix}\Bigg\|_{_2} \leq \Bigg\|\begin{bmatrix}
A_{pp} \\
A_{np} \\
\end{bmatrix}\Bigg\|_{_2} 
= \sigma_{\text{max}}(A_p)
$$
in proving the above result. Thus, when $A_p$ is ill-conditioned, it is possible that $R_{ff}$ is ill-conditioned and the error in the approximation is worse than $\epsilon$. We can improve the upper bound on the error by first scaling the interfaces as we prove next. 

\subsection{Accuracy of scaling}
\label{scaling_err_s}

Scale the diagonal blocks of all interfaces before sparsification as outlined in \Cref{scale_s}. If $U$ is the scaled version of $A$, then $H_f = Q_p$ and $R_f = I$. Then, 
\[ Q_p^TUQ_p = \begin{bmatrix}
 I_f & &  \\
 & I_c & \hat{A}_{cn}\\
  & \hat{A}_{nc} & I_n
 \end{bmatrix} +  \begin{bmatrix}
  & & \r{E_2} \\
 & & \\
 \r{E_1} &  & 
 \end{bmatrix} \]
 Define, 
 \[ Q_p^T \Tilde{U} Q_p = \begin{bmatrix}
 I_f & &  \\
 & I_c & \hat{A}_{cn}\\
  & \hat{A}_{nc} & I_n
 \end{bmatrix} \]
 Then the approximation error is,
\[
    \|Q_p^T(U-\Tilde{U})Q_p\|_{_2} = \|E_1\|_{_2} = \|E_2\|_{_2} \leq \epsilon
\]
Thus, we have a better error bound by rescaling the diagonal blocks before sparsification. 

\subsection{Effectiveness of the preconditioner}
Consider the same $2 \times 2$ block matrix A. After scaling and sparsification of interface $p$, we have 
\[
Q_p^TU Q_p =\begin{bmatrix}
 I_f & &  \\
 & I_c & \hat{A}_{cn}\\
  & \hat{A}_{nc} & I_n
 \end{bmatrix} +  \begin{bmatrix}
  & & \r{E_2} \\
  &  & \\
 \r{E_1} &  & 
 \end{bmatrix}
 \]
 Let us complete the factorization by  performing an exact QR factorization on the $c$ and $n$ blocks as follows
 \begin{align*}
 H_c^T Q_p^T U Q_p &=\begin{bmatrix}
 I_f & &  \\
 & R_{cc} & R_{cn}\\
  &  & \hat{A}_{nn}
 \end{bmatrix} + H_c^T \begin{bmatrix}
  & & \r{E_2} \\
 & & \\
 \r{\Tilde{E}_1} &  & 
 \end{bmatrix}\\
  H_c^T Q_p^TU Q_p R_c^{-1}
 &= \begin{bmatrix}
 I_f & &  \\
 & I_c & \\
  &  & \hat{A}_{nn}
 \end{bmatrix} + H_c^T \begin{bmatrix}
  & & \r{E_2} \\
  & & \\
 \r{\Tilde{E}_1} &  & 
 \end{bmatrix} \\
  S = H_n^TH_c^T Q_p^TU Q_p R_c^{-1} R_n^{-1} &=\begin{bmatrix}
 I_f & &  \\
 & I_c & \\
  &  & I_n
 \end{bmatrix} + H_n^TH_c^T \begin{bmatrix}
  & & \r{E_2}R_{nn}^{-1} \\
 & & \\
 \r{\Tilde{E}_1} &  &
 \end{bmatrix}
\end{align*}
With this, we have $S$ as the preconditioned matrix. The final error is, 
\[E = H_n^TH_c^T \begin{bmatrix}
  & & \r{E_2}R_{nn}^{-1} \\
 & & \\
 \r{\Tilde{E}_1} &  &
 \end{bmatrix}
\]
If we represent $H_c = \begin{bmatrix}
H_{cc} & H_{cn}
\end{bmatrix}$, then, $\hat{A}_{nn} = H_{cn}^T \begin{bmatrix}
\hat{A}_{cn} \\
I_n
\end{bmatrix}$. Since, $\hat{A}_{nn}$ is a product of an orthogonal and a well-conditioned matrix, $\hat{A}_{nn}$ is also well-conditioned.  Therefore, $\|R_{nn}^{-1}\|_{_2} = \mathcal{O}(1)$.
Then, 
\[ \|E\|_{_2} = \mathcal{O}(\epsilon)
\]
The condition number of the preconditioned matrix $S = I+E$ can be calculated as follows,
\[ \sigma_{\max} (S) = \max_{x\in \mathbb{R}^{M}} \frac{\|Ix+Ex\|_{_2}}{\|x\|_{_2}} \leq \max_{x\in \mathbb{R}^{M}} \frac{\|Ix\|_{_2}}{\|x\|_{_2}} + \max_{x\in  \mathbb{R}^{M}} \frac{\|Ex\|_{_2}}{\|x\|_{_2}} = 1+\|E\|_{_2} \]
\[ \sigma_{\min} (S) = \min_{x\in \mathbb{R}^{M}} \frac{\|Ix+Ex\|_{_2}}{\|x\|_{_2}} \geq \min_{x\in \mathbb{R}^{M}} \frac{\|Ix\|_{_2}}{\|x\|_{_2}} - \max_{x\in \mathbb{R}^{M}} \frac{\|Ex\|_{_2}}{\|x\|_{_2}} = 1-\|E\|_{_2} \]
Therefore, 
\[ \kappa(S) \leq \frac{1+ \|E\|_{_2}}{1-\|E\|_{_2}}\]

\subsection{Complexity Analysis}
\label{sec:complexity}

In this section, we discuss the complexity of the spaQR algorithm under some assumptions. Consider the Nested Dissection process on the graph of $A^TA$ ($G_{A^TA}$). Define a node as a subgraph of $G_{A^TA}$. The root of the tree corresponds to $l=1$ and the root node is the entire graph $G_{A^TA}$. The children nodes are subgraphs of $G_{A^TA}$ disconnected by a separator. 

We assume that the matrices and their graphs satisfy the following properties.
\begin{enumerate}
    \item The leaf nodes in the elimination tree contain at most $N_0$ nodes, where $N_0 \in \mathcal{O}(1)$.
    \item Let $D_i$ be the set of all nodes $j$ that are descendants of a node $i$, whose size is at least $n_i/2$. We assume that the size of $D_i$ is bounded, that is, $|D_i|=\mathcal{O}(1)$ for all $i$.
    \item All the Nested Dissection separators are minimal. That is, every vertex in the separator connects two disconnected nodes in $G_{A^TA}$.
    \item The number of edges leaving a node (subgraph) of size $n_i$ is at most $n_i^{2/3}$. In other words, a node of size $n_i$ is connected to at most $n_i^{2/3}$  vertices in $G_{A^TA}$. Most matrices that arise in the discretization of 2D and 3D PDEs satisfy this property.
\end{enumerate}  

\paragraph{Direct Householder QR} We first recover the cost of direct QR on $A$ with Nested Dissection partitioning on PDEs discretized on a 3D grid. Consider a node $i$ of size $2^{-l+1}N \leq n_i \leq 2^{-l+2}N$ at a level $l$ in the elimination tree. By assumption 4, the associated separator has size at most \[c_l \in \mathcal{O}\Big(2^{-2l/3}N^{2/3}\Big)\]
The fill-in from Householder QR on the interiors results in at most $\mathcal{O}(2^{-2l/3}N^{2/3})$ non-zeros per row and column. This is because of assumption 4 and the fact that new connections are introduced only between the distance 1 neighbors of a node in $G_{A^TA}$. Thus, the cost of Householder QR on a separator is 
\[
    h_l \in \mathcal{O}\Big(\big(2^{-2l/3}N^{2/3}\big)^3\Big) 
    = \mathcal{O}\big(2^{-2l}N^2\big)
\]
By the pigeonhole principle, the number of nodes of size $n_i$, with $2^{-l+1}N \leq n_i \leq 2^{-l+2}N$ is bounded by $2^{l-1}$. Then, the total cost of a direct Householder QR on the matrix is,
\[t_{\text{QR, fact}} \in \mathcal{O}\Bigg(\sum_{l=1}^{L}2^{l}h_l\Bigg) = \mathcal{O}\Bigg(\sum_{l=1}^{L}2^{-l}N^2\Bigg) = \mathcal{O}\big(N^2\big) \qquad L \in \Theta(\log(N/N_0))\]

The cost of applying the factorization can be derived similarly. Solving with a given right-hand side $b$ involves applying a sequence of orthogonal and upper triangular transformations corresponding to the factorization of each interior/separator. Since, for a node of size $2^{-l+1}N \leq n_i \leq 2^{-l+2}N$, the associated separator has a size of $c_l$ with at most $\mathcal{O}(2^{-2l/3}N^{2/3})$ non-zeros per row/column, the total cost of applying the factorization is, 
\[t_{\text{QR, apply}}  \in \mathcal{O}\Bigg(\sum_{l=1}^{L}2^{l} \Big(2^{-2l/3}N^{2/3}\Big)^2 \Bigg) = \mathcal{O}\big(N^{4/3}\big)\]

\paragraph{spaQR} Next, we show that the complexity of spaQR factorization is $\mathcal{O}(N\log N)$. To show this, we need additional assumptions on the sparsification process and the size of interfaces defined in \Cref{ord_clus}. Remember that an interface is a multilevel partitioning of a separator constructed such that its size is comparable to the diameter of the subdomains at that level (see \Cref{int}). Assume that sparsification reduces the size of an interface at level $l$ to, 
\[c_l' \in \mathcal{O}(2^{-l/3}N^{1/3})\]
Thus the size of a separator decreases from $c_l$ to $c_l'$  before it is factorized. This means that the rank scales roughly as the diameter of the separator. This assumption is a consequence of low rank interactions between separators that are far away in $G_{A^TA}$.  This is comparable to complexity assumptions in the fast multipole method~\cite{FMM_1, greengard_rokhlin_1997}, spaND~\cite{2019arXiv190102971C}, and HIF~\cite{Ho2016HierarchicalIF}. Further, assume that an interface has  $\mathcal{O}(1)$ neighbor interfaces.

The fill-in in the sparsified QR process results in at most $\mathcal{O}(2^{-l/3}N^{1/3})$ entries in each row and column. This is in part due to the assumption on the size of the interfaces, the number of neighbor interfaces and the fact that new connections are only made between distance 1 neighbors of a node in $G_{A^TA}$. 

The total cost of spaQR factorization can be split into two parts:
\begin{itemize}
    \item Householder QR on interiors/separators. The size of a separator is $c_l' \in \mathcal{O}(2^{-l/3}N^{1/3})$ right before it is factorized and has at most $\mathcal{O}(2^{-l/3}N^{1/3})$ non-zeros per row/column. Then the cost of Householder QR on a separator is 
    \[h_l' \in \mathcal{O}\Big(\big(2^{-l/3}N^{1/3}\big)^3\Big) = \mathcal{O}\big(2^{-l}N\big)\]
    \item Scaling and sparsification of interfaces. The cost of scaling (QR on a block of size $c_l'\times c_l'$) an interface is $\mathcal{O}\big(2^{-l}N\big)$. Similarly, the cost of sparsifying (rank-revealing QR) an interface is also $\mathcal{O}\big(2^{-l}N\big)$ because of the assumptions on the size and number of non-zeros per row/column of an interface.
\end{itemize}
Hence, the total cost of the spaQR algorithm is
\[t_{\text{spaQR}} \in \mathcal{O}\Bigg(\sum_{l=1}^{L}2^{l}2^{-l}N\Bigg) = \mathcal{O}\Bigg(\sum_{l=1}^{L} N\Bigg) = \mathcal{O}(N \log N), \qquad L \in \Theta(\log(N/N_0))\]
The total cost of applying the factorization is
\[t_{\text{spaQR, apply}}  \in \mathcal{O}\Bigg(\sum_{l=1}^{L}2^{l} \Big(2^{-l/3}N^{1/3}\Big)^2 \Bigg) = \mathcal{O}(N)\]
The memory requirements scales as the cost of applying the factorization. We show some numerical results on the size of interfaces, the number of non-zeros rows and columns per interface block and the cost of sparsification per level on a typical example in \Cref{Sec: Profiling}. These experimental results corroborate the assumptions made here. 

\section{Benchmarks}
\label{benchmarks}

In this section, we benchmark the performance of the algorithm in solving  unsymmetric system of linear equations (high and low contrast advection diffusion problems) on uniform 2D and 3D grids and sparse matrices from Suite Sparse Matrix Collection~\cite{suitesparse} and SPARSKIT collection~\cite{Boisvert1997}. We use geometric partitioning on $A^TA$ to get the separators and interfaces for the advection diffusion problem on regular grids and Hypergraph based partitioning on $A$ using PaToH~\cite{atalyrek2011PaToHT} for the non-regular problems. For a given matrix $A$ and a tolerance $\epsilon$, the spaQR algorithm (\Cref{Algo: spaQR}) is used to compute an approximate factorization which is then used as a preconditioner with a suitable iterative solver. GMRES is used as the iterative solver and the convergence criteria is set as $\|Ax-b\|_{_2}/\|b\|_{_2} \leq 10^{-12}$. 

The algorithm was written in C++.  We use GCC 8.1.0 and Intel(R) MKL 2019 for Linux for the BLAS and LAPACK operations. The number of levels in the nested dissection process is chosen as $\lceil\log (N/64)/\log 2\rceil$ for a matrix of size $N \times N$. Low rank approximations are performed using LAPACK's dlaqps routine which performs a column pivoted QR on $r$ columns. The value $r$ is chosen such that $\frac{|R_{ii}|}{|R_{11}|} \geq \epsilon$ for $1\leq i \leq r$, where $R$ is the upper triangular matrix that comes out of the column pivoted QR method. We typically begin sparsification on levels 3 or 4. 

\subsection{Impact of Scaling}

We first compare the performance of the spaQR algorithm with and without the block diagonal scaling described in \Cref{scale_s}. First, we test the performance on flow problems in regular grids and then on non-regular problems.

\subsubsection{High contrast Advection Diffusion equations in 2D} 

Consider the variable coefficient advection diffusion equation, 
\[ -\nabla\big(a(\mathbf{x}) \cdot \nabla u(\mathbf{x})\big) + q \nabla \cdot \big(b(\mathbf{x}) u(\mathbf{x})\big) = f \quad \forall \mathbf{x} \in \Omega =[0,1], \quad u|_{d\Omega}=0 \]
where $a(\mathbf{x})$, $b(\mathbf{x})$ are sufficiently regular functions. In this example, the function $a(\mathbf{x})$ is a high contrast field quantized by a parameter $\rho$. Specifically, the field is built as follows on a $n \times n$ grid:
\begin{itemize}
    \item For every grid point $(i,j)$ choose $\hat{a}_{ij}$ uniformly at random between 0 and 1
    \item Smooth $\hat{a}$ by convolving with a unit-width Gaussian
    \item Define \[a_{ij} =  \begin{cases}
    \rho & \text{if }\hat{a}_{ij} \geq 0.5 \\
    \rho^{-1} & \text{otherwise }
    \end{cases}\]
\end{itemize}
The values of $b(\mathbf{x})$ and $q$ are set to 1. The equation is discretized on a uniform 2D $n \times n$ grid. The matrices corresponding to this discretization are generated using the open source code from~\cite{leopold_matrixgen}. 

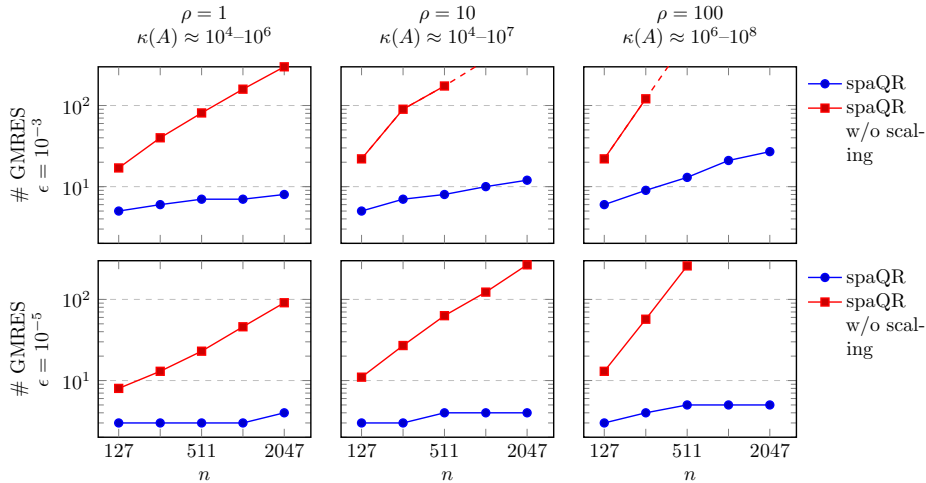
\begin{figure}[tbhp]
\centering
\begin{subfigure}{\textwidth}
\centering
\scalebox{0.75}{
\begin{tikzpicture}
\begin{loglogaxis}[
    ylabel style = {align=center},
    title style = {align = center},
    title = {$\rho = 1$ \\ $\kappa(A)\approx 10^4\text{--}10^6$},
    scale = 0.55,
    ylabel= {$\#$ GMRES \\ $\epsilon=10^{-3}$},
    xmin=90, xmax=3200,
    ymin=2, ymax=300,
    xtick = \empty,
    extra x ticks = {127, 255, 511, 1023, 2047},
    extra x tick labels = \empty,
    ymajorgrids=true,
    line width=0.25mm,
    grid style=dashed
]
\addplot coordinates {
    (127, 5)(255, 6) (511, 7) (1023, 7) (2047, 8)
    };
\addplot coordinates {
    (127, 17)(255,40) (511, 81) (1023, 159) (2047, 300)
    };
\end{loglogaxis}
\end{tikzpicture}}
\scalebox{0.75}{
\begin{tikzpicture}
\begin{loglogaxis}[
title style = {align = center},
    title = {$\rho = 10$ \\ $\kappa(A)\approx 10^4\text{--}10^7$},
    scale = 0.55,
    xmin=90, xmax=3200,
    ymin=2, ymax=300,
    xtick = \empty,
    yticklabel = \empty,
    extra x ticks = {127, 255, 511, 1023, 2047},
    extra x tick labels = \empty,
    ymajorgrids=true,
    line width=0.25mm,
    grid style=dashed
]
\addplot coordinates {
    (127, 5)(255, 7) (511, 8) (1023, 10) (2047,12)
    };
\addplot coordinates {
    (127, 22)(255,90) (511, 174) (1023,nan ) (2047, nan)
    };
\addplot[draw = red, dashed] coordinates {
    (127, 22)(255,90) (511, 174) (1023, 360 ) (2047, nan)
    };
\end{loglogaxis}
\end{tikzpicture}}
\scalebox{0.75}{
\begin{tikzpicture}
    \pgfplotsset{
every axis legend/.append style={
at={(1.02,1)},
anchor=north west,
text width=40pt},
legend cell align=left}
\begin{loglogaxis}[
title style = {align = center},
    title = {$\rho = 100$ \\ $\kappa(A)\approx 10^6\text{--}10^8$},
legend columns = 1,
legend style = {draw = none},
    scale = 0.55,
    xmin=90, xmax=3200,
    ymin=2, ymax=300,
    xtick = \empty,
    extra x ticks = {127, 255, 511, 1023, 2047},
    extra x tick labels = \empty,
    ymajorgrids=true,
    yticklabel = \empty,
    line width=0.25mm,
    grid style=dashed,
    legend entries  = {spaQR, spaQR w/o scaling}
]
\addplot coordinates {
    (127, 6)(255, 9) (511, 13) (1023, 21) (2047, 27)
    };
\addplot coordinates {
    (127, 22)(255,121) (511, nan) (1023, nan) (2047, nan)
    };
\addplot[draw = red, dashed] coordinates {
    (127, 22)(255,121) (511, 660) (1023, nan) (2047, nan)
    };
\end{loglogaxis}
\end{tikzpicture}}
\end{subfigure}

\begin{subfigure}{\textwidth}
\centering
\scalebox{0.75}{
\begin{tikzpicture}
\begin{loglogaxis}[
    ylabel style = {align=center},
    scale = 0.55,
    ylabel= {$\#$ GMRES \\ $\epsilon=10^{-5}$},
    xlabel = {$n$},
    xmin=90, xmax=3200,
    ymin=2, ymax=300,
    xtick = \empty,
    extra x ticks = {127, 255, 511, 1023, 2047},
    extra x tick labels = {127,  ,511, , 2047},
    ymajorgrids=true,
    line width=0.25mm,
    grid style=dashed
]
\addplot coordinates {
    (127, 3)(255, 3) (511,3) (1023, 3) (2047, 4)
    };
\addplot coordinates {
    (127, 8)(255,13) (511, 23) (1023, 46) (2047, 91)
    };
\end{loglogaxis}
\end{tikzpicture}}
\scalebox{0.75}{
\begin{tikzpicture}    
\begin{loglogaxis}[
    scale = 0.55,
    xmin=90, xmax=3200,
    ymin=2, ymax=300,
    xtick = \empty,
    yticklabel = \empty,
    xlabel = $n$,
    extra x ticks = {127, 255, 511, 1023, 2047},
    extra x tick labels = {127,  ,511, , 2047},
    ymajorgrids=true,
    line width=0.25mm,
    grid style=dashed
]
\addplot coordinates {
    (127, 3)(255,3) (511, 4) (1023, 4) (2047,4)
    };
\addplot coordinates {
    (127, 11)(255,27) (511,63) (1023,123 ) (2047, 267)
    };
\end{loglogaxis}
\end{tikzpicture}}
\scalebox{0.75}{
\begin{tikzpicture}
\pgfplotsset{
every axis legend/.append style={
at={(1.02,1)},
anchor=north west,
text width=40pt},
legend cell align=left}
\begin{loglogaxis}[
legend columns = 1,
legend style = {draw = none},
    scale = 0.55,
    xmin=90, xmax=3200,
    ymin=2, ymax=300,
    xtick = \empty,
    extra x ticks = {127, 255, 511, 1023, 2047},
    extra x tick labels = {127,  ,511, , 2047},
    ymajorgrids=true,
    yticklabel = \empty,
    xlabel = $n$,
    line width=0.25mm,
    grid style=dashed,
    legend entries  = {spaQR, spaQR w/o
    scaling}
]
\addplot coordinates {
    (127, 3)(255, 4) (511, 5) (1023, 5) (2047, 5)
    };
\addplot coordinates {
    (127, 13)(255,57) (511, 259) (1023, nan) (2047, nan)
    };
\end{loglogaxis}
\end{tikzpicture}}
\end{subfigure}

\caption{Comparison of the spaQR algorithm with and without scaling on 2D $n \times n$ High Contrast Advection Diffusion problems for three values of the parameter $\rho$. The two variations of the spaQR algorithm are compared for two values of the tolerance $\epsilon=10^{-3}$, $10^{-5}$. Notice that the spaQR algorithm (with scaling) outperforms the variant without scaling in all the cases. Moreover, for small enough $\epsilon$, spaQR algorithm converges in a constant number of iterations irrespective of the problem size for three values of the parameter $\rho$.}
\label{fig:hc_ad_comparison}
\end{figure}

In \Cref{fig:hc_ad_comparison}, we compare the number of GMRES iterations needed to converge by the two variants of the algorithm for three values of the parameter $\rho$. The problem becomes increasingly ill-conditioned as the parameter $\rho$ increases. The spaQR algorithm (with scaling) performs much better as compared to the variant without block diagonal scaling. For small enough tolerance $\epsilon$, the convergence of the spaQR algorithm is independent of the problem size $N=n^2$.

\subsubsection{Non-regular problems}

Next, we test the two variants of the spaQR algorithm on a set of matrices taken from the SuiteSparse Matrix Collection~\cite{suitesparse}. The name of the matrices and their properties such as the size, the number of non-zero entries, pattern symmetry, numerical symmetry and the application domain are given in \Cref{Table: suite sparse}. The matrices are partitioned using the modified HUND and row ordering is performed based on the heuristics discussed in \Cref{ord_clus}. 

The number of GMRES iterations taken by the two variants of the spaQR algorithm for the ten matrices listed in \Cref{Table: suite sparse} are given in \Cref{Table: suite_sparse_results}. In nine out of the ten cases, spaQR algorithm (with scaling) performs better than the variant without block diagonal scaling. With a lower tolerance of $\epsilon = 10^{-6}$, both variants have almost the same performance.

\begin{table}[tbhp]
    \centering
    \label{Table: suite sparse}
    \caption{List of test matrices and their properties: number of rows and columns (size), number of non-zeros (nnz), pattern symmetry (pat. sym.), numerical symmetry (num. sym.) and the problem domain (Kind). }
    \begin{tabular}{rrrrrrp{115pt}}
        \toprule
        \# & Matrix & size & nnz & Pat. & Num. & Kind\\
        & & & & sym. & sym. & \\
        \midrule
        1 & cavity15 & 2195 & 71601 & 5.9 & 0.0 & Subsequent CFD Problem \\
        2 & cavity26 & 4562 & 138187 & 5.9 & 0.0 & Subsequent CFD Problem \\
        3 & dw4096 & 8192 & 41746 & 96.3 & 91.5 & Electromagnetics problem \\
        4 & Goodwin\_030 & 10142 & 312814 & 96.6 & 6.3 & CFD problem \\
        5 & inlet & 11730 & 328323 & 60.8 & 0 & Model Reduction Problem \\
        6 & Goodwin\_040 & 17922 & 561677 & 97.5 & 6.4 & CFD problem \\
        7 & wang4 & 26068 & 177196 & 100 & 4.6 & Semiconductor device problem \\
        8 & Zhao1 & 33381 & 166453 & 92.2 &0.0 & Electromagnetics problem\\
        9 & Chevron1 & 37365 & 330633 & 99.5 & 71.0 & Seismic modelling \\
        10 & cz40948 & 40948 & 412148 & 43.5 & 23.7 & Closest Point Method \\
        \bottomrule
    \end{tabular}
\end{table}

\subsubsection{2D flow in a driven cavity}

The lid-driven flow in a cavity is a well-studied problem. The problem deals with a viscous incompressible fluid flow in a square cavity. The cavity consists of three rigid walls with no-slip conditions and a lid moving with tangential unit velocity. This results in a circular flow. 
\begin{table}[tbhp]
    \centering
        \caption{Performance of the spaQR algorithm with and without scaling in terms of the number of GMRES iterations needed to converge. The test problems are listed in \Cref{Table: suite sparse}.}
    \label{Table: suite_sparse_results}
    \begin{tabular}{@{}rrrrr@{}}
        \toprule
         & \multicolumn{2}{c}{\# GMRES, $\epsilon=10^{-3}$} & \multicolumn{2}{c}{\# GMRES, $\epsilon=10^{-6}$} \\
        
        \cmidrule(lr){2-3} 
        \cmidrule(lr){4-5} 
        \# & spaQR & spaQR & spaQR & spaQR \\
         & & w/o scaling & & w/o scaling\\
        \midrule
        1 & 58 & \textbf{43} & \textbf{5} & 10\\
        2 & \textbf{25} &87& \textbf{4}& 11\\
        3 & \textbf{23} & 45 & 4 & 4\\
        4 & \textbf{7} & 16 & \textbf{3} & 4\\
        5 & \textbf{75} & 138 & \textbf{5} & 7\\
        6 & \textbf{7} & 22 & \textbf{3} & 4 \\
        7 & \textbf{6} & 17 & \textbf{3} & 4 \\
        8 & \textbf{6} & 7 & 5 & 5\\
        9 & \textbf{21} & 108 & \textbf{4} & 6\\
        10 & \textbf{5} & 77 & \textbf{2} & 9 \\
        \bottomrule
    \end{tabular}
\end{table}

The matrices arising from this problem are real and unsymmetric (symmetric indefinite in the case of $\text{Re}=0$). They are good test cases for iterative solvers as they are difficult to solve without an efficient preconditioner~\cite{Boisvert1997}. Incomplete LU based preconditioners fail on these matrices. They are unstable due to singular pivots. The spaND algorithm also fails on these matrices for the same reasons. 

On the other hand, spaQR provides increased stability and the spaQR preconditioned system converges in less than 50 GMRES iterations for a wide range of Reynolds number. The matrices used for testing are taken from the SPARSKIT collection~\cite{Boisvert1997} and have a size of 17,281 with 553,956 non-zero entries. The performance of the two variants of spaQR algorithm in terms of the number of GMRES iterations needed to converge are shown in \Cref{Table: cavity_flow}  for $0 \leq \text{Re} \leq 5000$.  spaQR algorithm (with scaling) outperforms the variant without scaling for the entire range of Reynolds number tested. However, neither of the two variants break down during the factorization phase. 

\begin{table}[tbhp]
    \centering
        \caption{Performance of spaQR algorithm on 2D fluid flow in a driven cavity. spaQR w/o scaling failed to converge in less than 300 iterations for the last two matrices. }
    \label{Table: cavity_flow}
    \begin{tabular}{@{}crcc@{}}
        \toprule
         & & \multicolumn{2}{c}{\# GMRES, $\epsilon=10^{-5}$}  \\
        
        \cmidrule(lr){3-4} 
        Matrix & Re & spaQR & spaQR  \\
        & & & w/o scaling\\
        \midrule
        E40R0000 & 0& 6 & 39 \\
        E40R0100 &100 & 7 & 42 \\
        E40R0500 &500 & 6 & 46\\
        E40R1000 & 1000 & 11 & 62 \\
        E40R2000 & 2000 & 23 & 138\\
        E40R3000 & 3000 & 19 & 225 \\
        E40R4000 & 4000 & 36 & ---\\
        E40R5000 & 5000 & 21 & ---\\
        \bottomrule
    \end{tabular}

\end{table}

Along with the theoretical results on scaling (see \Cref{theoretical_results_sec}), the numerical experiments show that, in general, scaling is advantageous and leads to better performance. However, scaling should be used with caution for highly ill-conditioned problems. For these problems, scaling can only be done on alternate levels or can be done based on the condition number of the diagonal blocks. This is a topic for future research. In the rest of the section, we only consider the variant with block diagonal scaling (spaQR). 

\subsection{Scaling with problem size} Next, we study the variation in the time to build the preconditioner and the number of GMRES iterations with the problem size on 2D and 3D Advection Diffusion problems.

\subsubsection{2D Advection Diffusion problem}

Let us consider the variable coefficient advection diffusion equation with $a(\mathbf{x})=1$. The constant $q$ controls the magnitude of the convective term. The equation is discretized on a uniform $n \times n$ 2D grid using the centered finite difference scheme.  The resulting linear system becomes strongly unsymmetric as the convective term becomes dominant (higher value of $q$) and hence, is challenging to solve. We test the performance of our algorithm on these problems with different parameters $b(\mathbf{x})$, $q$ with  $a(\mathbf{x})$ fixed at $1$ .  The spaQR algorithm is used as a preconditioner to accelerate the convergence of the GMRES iterative solver. 

\begin{figure}[tbhp]
\centering
\begin{subfigure}[t]{\textwidth}
\centering
\scalebox{0.75}{
\begin{tikzpicture}
\begin{loglogaxis}[
    scale = 0.75,
    ylabel={$\#$ GMRES},
    xlabel = {$N$},
    ymin = 2, ymax = 100,
    xtick = \empty,
    extra x ticks = {16000, 6.5*10^4, 2.5*10^5, 10^6, 4*10^6},
    extra x tick labels = {16k, , 0.25M, ,4M},
    ymajorgrids=true,
    line width=0.25mm,
    grid style=dashed,
]
\addplot coordinates {
    (128^2, 9)(256^2, 11) (512^2, 14) (1024^2, 17) (2048^2, 23)
    };
\addplot coordinates {
    (128^2, 8)(256^2, 9) (512^2, 11) (1024^2, 13) (2048^2, 16)
    };
\addplot+[mark = triangle*] coordinates {
    (128^2, 8)(256^2, 9) (512^2, 9) (1024^2, 11) (2048^2, 13)
    };
\end{loglogaxis}
\end{tikzpicture}}
\hspace{0.2cm}
\scalebox{0.75}{
\begin{tikzpicture}
\pgfplotsset{
every axis legend/.append style={
at={(1.02,1)},
anchor=north west},
legend cell align=left}
        \begin{loglogaxis}[
        legend columns = 1,
        legend style = {draw = none},
            scale = 0.75,
            xlabel = {$N$},
            ylabel = {Time to factorize ($s$)},
            ymin = 0.1, ymax = 100,
            xtick = \empty,
            extra x ticks = {16000, 6.5*10^4, 2.5*10^5, 10^6, 4*10^6},
            extra x tick labels = {16k, , 0.25M, ,4M},
            ymajorgrids=true,
            line width=0.25mm,
            grid style=dashed,
            legend entries = {$q = 1$, $q=25$, $q=1000$}
        ]
\addplot coordinates {
    (16384, 0.29)(65536, 0.753) (262144, 2.83) (1048576, 13.5) (4194304, 54.25)
    };
\addplot coordinates {
    (128^2, 0.19)(256^2, 0.786) (512^2, 2.769) (1024^2, 12.8) (2048^2, 47.8)
    };
\addplot+[mark = triangle*] coordinates {
    (128^2, 0.233)(256^2, 0.796) (512^2, 2.884) (1024^2, 12.75) (2048^2, 48)
    };
\addplot [black, domain = 128^2:2048^2] {x/150000};
\node [ anchor=center] at (3.5*10^5,1) {$\mathcal{O}(N)$};
        \end{loglogaxis}
    \end{tikzpicture}}
\end{subfigure}
\caption{Results for the 2D advection diffusion problem for varying values of $q$. The threshold $\epsilon$ for ignoring singular values in the spaQR algorithm is $\epsilon = 10^{-2}$. Note that the number of iterations grows slowly and the factorization time scales linearly with problem size for all three values of $q$.}
\label{ad_figure}
\end{figure}
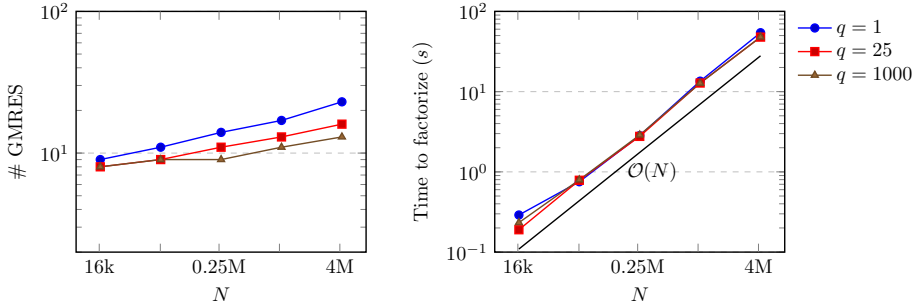
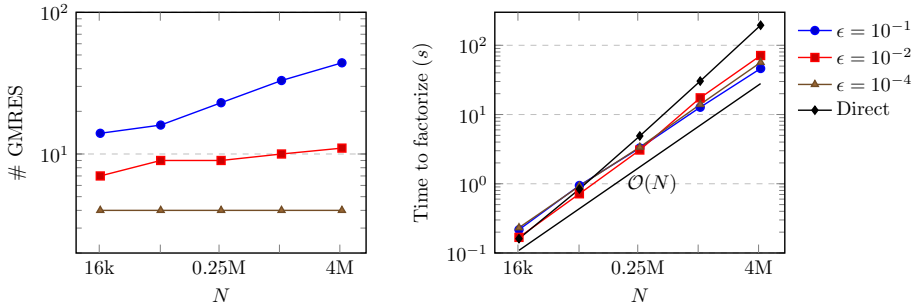
\begin{figure}[tbhp]
\centering
\begin{subfigure}[t]{\textwidth}
\centering
\scalebox{0.75}{
\begin{tikzpicture}
\begin{loglogaxis}[
    scale = 0.75,
    ylabel={$\#$ GMRES},
    xlabel = {$N$},
    ymin = 2, ymax = 100,
    xtick = \empty,
    extra x ticks = {16000, 6.5*10^4, 2.5*10^5, 10^6, 4*10^6},
    extra x tick labels = {16k, , 0.25M, ,4M},
    ymajorgrids=true,
    line width=0.25mm,
    grid style=dashed
]
\addplot coordinates {
    (128^2, 14)(256^2, 16) (512^2, 23) (1024^2, 33) (2048^2, 44)
    };
\addplot coordinates {
    (128^2, 7)(256^2, 9) (512^2, 9) (1024^2, 10) (2048^2, 11)
    };
\addplot+[mark = triangle*] coordinates {
    (128^2, 4)(256^2, 4) (512^2, 4) (1024^2, 4) (2048^2, 4)
    };
\end{loglogaxis}
\end{tikzpicture}}
\hspace{0.2cm}
\scalebox{0.75}{
\begin{tikzpicture}
\pgfplotsset{
every axis legend/.append style={
at={(1.02,1)},
anchor=north west},
legend cell align=left}
        \begin{loglogaxis}[
        legend columns = 1,
        legend style = {draw = none},
            scale = 0.75,
            xlabel = {$N$},
            ylabel = {Time to factorize ($s$)},
            ymin = 0.1, ymax = 300,
            xtick = \empty,
            extra x ticks = {16000, 6.5*10^4, 2.5*10^5, 10^6, 4*10^6},
            extra x tick labels = {16k, , 0.25M, ,4M},
            ymajorgrids=true,
            line width=0.25mm,
            grid style=dashed,
              legend entries = {$\epsilon = 10^{-1}$, $\epsilon = 10^{-2}$, $\epsilon = 10^{-4}$,
              Direct}]
\addplot coordinates {
    (128^2, 0.217)(256^2, 0.944) (512^2, 3.29) (1024^2, 12.73) (2048^2, 46.2)
    };
\addplot coordinates {
    (128^2, 0.168)(256^2, 0.716) (512^2, 3.07) (1024^2, 17.42) (2048^2, 71.2)
    };
\addplot+[mark = triangle*] coordinates {
    (128^2, 0.232)(256^2, 0.924) (512^2, 3.38) (1024^2, 14) (2048^2, 56.6)
    };
\addplot+[mark = diamond*] coordinates {
    (128^2, 0.161)(256^2, 0.838) (512^2, 4.9) (1024^2, 30.4) (2048^2, 194.8)
    };
\addplot [black, domain = 128^2:2048^2] {x/150000};
\node [ anchor=center] at (3.5*10^5,1) {$\mathcal{O}(N)$};
        \end{loglogaxis}
    \end{tikzpicture}}
\end{subfigure}
\caption{Variation in the number of iterations and time to factorize with tolerance $\epsilon$ for the 2D  advection diffusion problem with $a = 1$, $b(x,y) = e^{x+y}$, $q=1000$. The iteration count is constant for small enough tolerance $\epsilon$ and the factorization time scales linearly with the problem size. The direct method with the same partition scales as $\mathcal{O}(N^{3/2})$.}
\label{ad_eps}
\end{figure}

\Cref{ad_figure} compares the number of GMRES iterations needed for convergence and the time taken to factorize for the 2D advection diffusion problem with $a=1$, $b=1$, and $q=1$, 25, 1000. The time to factorize the matrix scales as $\mathcal{O}(N)$ in contrast to Nested Dissection Householder QR which scales as $\mathcal{O}(N^{3/2})$. Combining this with the slow increase in the number of iterations to converge, gives a approximate complexity of $\mathcal{O}(N)$ complexity to the algorithm.

In \Cref{ad_eps}, we compare the iteration count and time to factorize for various values of the tolerance $\epsilon$. Note that the time to factorize scales as $\mathcal{O}(N)$ independent of the value of $\epsilon$ used. The rate of convergence of the residual $\|Ax-b\|_{_2}/\|b\|_{_2}$ with the GMRES iterations is shown in \Cref{fig:2d_ad_gmres_residual}. The rate of convergence of the residual increases greatly as the tolerance $\epsilon$ is decreased from $10^{-1}$ to $10^{-4}$. The optimal value of $\epsilon$ depends on the problem and is to be chosen such that the overall time (factorization $+$ solve) is minimized. 

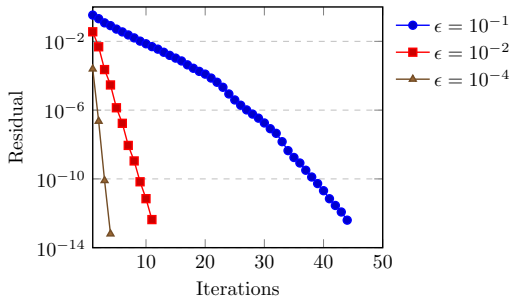
\begin{figure}[tbhp]
    \centering
    \scalebox{0.75}{
    \begin{tikzpicture}
\pgfplotsset{
every axis legend/.append style={
at={(1.02,1)},
anchor=north west},
legend cell align=left}
\begin{semilogyaxis}[
legend columns = 1,
legend style = {draw = none},
    scale = 0.75,
    ylabel={Residual},
    xlabel = {Iterations},
    ymin = 1e-14, ymax =1,
    xmin=1, xmax=50,
    ymajorgrids=true,
    line width=0.25mm,
    grid style=dashed,
    legend entries = {$\epsilon = 10^{-1}$, $\epsilon = 10^{-2}$, $\epsilon = 10^{-4}$}
]
\addplot table {results/2d_ad_q1000_p3_t0_1.dat};
\addplot table {results/2d_ad_q1000_p3_t0_01.dat};
\addplot+[mark = triangle*] table {results/2d_ad_q1000_p3_t0_0001.dat};
\end{semilogyaxis}
\end{tikzpicture}}
    \caption{The convergence of the residual $\|Ax-b\|_{_2}/\|b\|_{_2}$ with the number of GMRES iterations for different values of the tolerance $\epsilon$ for the 2D advection diffusion problem on the $2048 \times 2048$ grid.}
    \label{fig:2d_ad_gmres_residual}
\end{figure}

\subsubsection{3D Advection Diffusion problem}
\begin{figure}[tbhp]
\centering
\begin{subfigure}[t]{\textwidth}
\centering
\scalebox{0.75}{
\begin{tikzpicture}

\begin{loglogaxis}[
    scale = 0.75,
    ylabel={$\#$ GMRES},
    xlabel = {$N$},
    ymin = 5, ymax = 500,
    xmin=200000, xmax=18000000,
    xtick = {2.5*10^5, 5*10^5, 10^6, 2*10^6, 4*10^6, 8*10^6, 16*10^6},
    xticklabels = {0.25M, ,1M, ,4M, ,16M},
    ymajorgrids=true,
    line width=0.25mm,
    grid style=dashed
]
\addplot coordinates {
    (262144, 68) (512000, 87)
    (884736, 102) (2097152, 130 ) (4096000, 162) (7077888, 199) (16777216, nan)
    };

\addplot coordinates {
    (262144, 12) (512000, 14) (884736, 14) (2097152, 14) (4096000, 17) (7077888, 19) (16777216, nan)
    };
\end{loglogaxis}
\end{tikzpicture}}
\scalebox{0.75}{
    \begin{tikzpicture}
\pgfplotsset{
every axis legend/.append style={
at={(1.02,1)},
anchor=north west},
legend cell align=left}
        \begin{loglogaxis}[
        legend columns = 1,
        legend style = {draw = none},
            scale = 0.75,
            xlabel = {$N$},
            ylabel = {Time to factorize ($s$)},
            ymin = 30, ymax = 400000,
            xmin=200000, xmax=18000000,
            xtick = {2.5*10^5, 5*10^5, 10^6, 2*10^6, 4*10^6, 8*10^6, 16*10^6},
            xticklabels = {0.25M, ,1M, ,4M, ,16M},
            ymajorgrids=true,
            line width=0.25mm,
            grid style=dashed,
              legend entries = {$\epsilon = 10^{-1}$,  $\epsilon = 10^{-2}$, Direct}
        ]
\addplot coordinates {
    (262144, 42.07) (512000, 101.8 )
    (884736, 211) (2097152,651 ) (4096000, 1616.8) (7077888, 3462.4) (16777216, 14206)
    };

\addplot coordinates {
    (262144, 140) (512000, 339.58)
    (884736, 701.8) (2097152, 2298) (4096000, 6217) (7077888,15503) (16777216, nan)
    };
\addplot+[draw=black, mark = diamond*, mark options = {fill=black}] coordinates {
    (262144, 411.3) (512000, 1618) (884736, 5471) (2097152, nan) (4096000, nan) (7077888, nan) (16777216, nan)
    };
\addplot[draw=black, dashed, mark= diamond, mark options={solid}] coordinates {
    (262144, nan) (512000, nan) (884736, 5471) (2097152, 32000) (4096000, 122070) (7077888, 364500) (16777216, nan)
};

\addplot [black, domain = 250000:16000000] {x*log2(x)/150000};
\node [anchor=center] at (5000000,170) {$\mathcal{O}(N\log N)$};
        \end{loglogaxis}
    \end{tikzpicture}}
\end{subfigure}
\caption{Variation in the number of iterations and time to factorize with tolerance $\epsilon$ for the 3D $n \times n \times n$ advection diffusion problem with $a=1$, $b=1$, $q=1$. The iteration count increases slowly for small enough tolerance $\epsilon$. Empirically,  the factorization time scales as $\mathcal{O}(N^{1.4})$. The missing data points with spaQR either indicate that the factorization time was more than 5 hours or that GMRES took more than 200 iterations to converge. The scaling of the direct method has been extrapolated for $N= 128^3$, $160^3$, $192^3$. }
\label{Figure:3d_ad}
\end{figure}
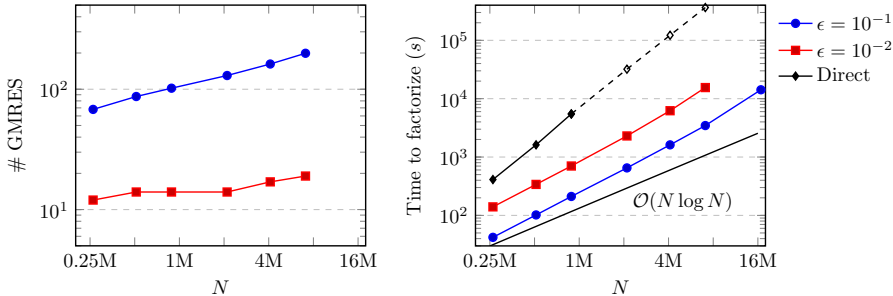

Consider the advection diffusion problem on a uniform $n \times n \times n$ 3D grid. The size of the matrix is $N = n^3$. The performance of the algorithm is reported in terms of the time to factorize and the number of GMRES iterations needed to converge in \Cref{Figure:3d_ad} for various values of tolerance $\epsilon$. Theoretically, we expect the factorization time to scale as $\mathcal{O}(N\log N)$ (see \Cref{sec:complexity}). However, the empirical complexity is $\mathcal{O}(N^{1.4})$. This is likely due to non-asymptotic effects. The convergence of the residual $\|Ax-b\|_{_2}/\|b\|_{_2}$ with the iteration count is shown in \Cref{fig:3d_ad_gmres_residual} for $N=192^3$. Similar to the 2D case, we notice that rate of convergence of the residual increases drastically as the tolerance $\epsilon$ is decreased from $10^{-1}$ to $10^{-2}$.

\begin{figure}[tbhp]
    \centering
    \scalebox{0.75}{
    \begin{tikzpicture}
\pgfplotsset{
every axis legend/.append style={
at={(1.02,1)},
anchor=north west},
legend cell align=left}
\begin{semilogyaxis}[
legend columns = 1,
legend style = {draw = none},
    scale = 0.75,
    ylabel={Residual},
    xlabel = {Iterations},
    ymin = 1e-14, ymax =1,
    xmin=1, xmax=200,
    ymajorgrids=true,
    line width=0.25mm,
    grid style=dashed,
    legend entries = { $\epsilon = 10^{-1}$, $\epsilon = 10^{-2}$}
]

\addplot table {results/3d_ad_192_t0_1.dat};
\addplot+[mark = triangle*] table {results/3d_ad_192_t0_01.dat};
\end{semilogyaxis}
\end{tikzpicture}}
    \caption{Convergence of the residual $\|Ax-b\|_{_2}/\|b\|_{_2}$ with the number of GMRES iterations for different values of tolerance $\epsilon$ for the 3D advection diffusion problem on the $192 \times 192 \times 192$ grid.}
    \label{fig:3d_ad_gmres_residual}
\end{figure}
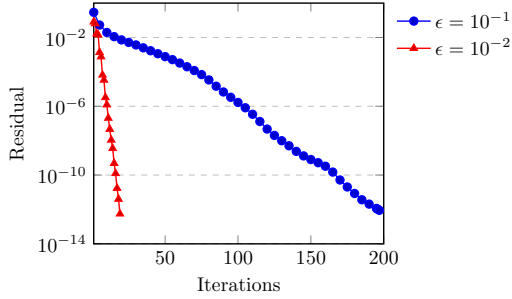
\begin{figure}[tbhp]
\centering
\scalebox{0.75}{
\begin{tikzpicture}
\pgfplotsset{
every axis legend/.append style={
at={(1.02,1)},
anchor=north west},
legend cell align=left}
\begin{semilogyaxis}[
legend columns = 1,
legend style = {draw = none},
    scale = 0.75,
    ylabel={$|R_{ii}|/|R_{11}|$},
    ymin = 1e-8, ymax = 1,
    xmin=1, xmax=350,
    ymajorgrids=true,
    line width=0.25mm,
    grid style=dashed,
    legend entries = { $l=8$, $l=6$, $l=4$, $l=2$}
]
\addplot+ table [x expr=\coordindex, y index=0]  {results/3d_64_sv_3.dat};
\addplot+ table [x expr=\coordindex, y index=0]  {results/3d_64_sv_5.dat};
\addplot+ table [x expr=\coordindex, y index=0]  {results/3d_64_sv_7.dat};
\end{semilogyaxis}
\end{tikzpicture}}
\caption{The singular value decay of the block $\begin{bmatrix} A_{np}^T & A_{pn} \end{bmatrix}$ corresponding to an interface $p$ of the top separator at various levels of sparsification. The diagonal entries $|R_{ii}|$ of a column pivoted QR on the block is used as a substitute for the singular values. The results shown are on the 3D advection diffusion problem with $N=64^3$.}
\label{fig:sv_decay}
\end{figure}
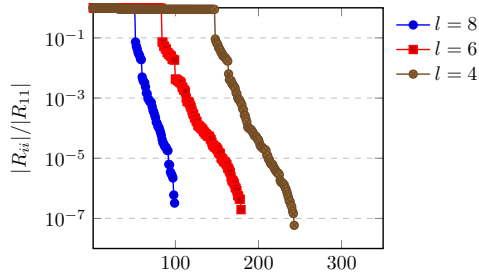
\subsection{Profiling}
\label{Sec: Profiling}

\begin{figure}[tbhp]
\centering
\begin{subfigure}{\textwidth}
\centering
\scalebox{0.75}{
\begin{tikzpicture}
        \begin{semilogyaxis}[
            scale = 0.7,
            ylabel={Size of interface},
            ymin = 25, ymax = 5000,
            xmin=1, xmax = 12,
            xticklabel = \empty,
            x dir=reverse,
            enlarge x limits=0.1,
            bar width = 8pt,
            ymajorgrids=true,
            grid style=dashed,
            line width=0.25mm,
            xtick align = inside
        ]
        \addplot[ybar, draw=blue, fill=blue!30, error bars/.cd,
        y explicit,
        y dir=both,
        error bar style={line width=0.3mm, black}] 
                    table [
                    x = level,
                    y = median,
                    y error plus expr=\thisrow{q2}-\thisrow{median},
                    y error minus expr=\thisrow{median}-\thisrow{q1}
                    ] 
                    {results/3d_64_ranks.dat};
        \addplot [red, dashed, domain = 1:10] {2^(-(x-1)/3)*700};
        \end{semilogyaxis}
\end{tikzpicture}}
\scalebox{0.75}{
    \begin{tikzpicture}
        \begin{semilogyaxis}[
            scale = 0.7,
            ymin = 25, ymax = 5000,
            xmin=1, xmax = 15,
            xticklabel = \empty,
            x dir=reverse,
            yticklabel = \empty,
            enlarge x limits=0.07,
            bar width = 7pt,
            ymajorgrids=true,
            line width =0.25mm,
            grid style=dashed,
            xtick align = inside
        ]
        \addplot[ybar, draw=blue, fill=blue!30,error bars/.cd,
                    y explicit,
                    y dir=both, 
                    error bar style={line width=0.3mm, black}] 
                    table [
                    x = level,
                    y = median,
                    y error plus expr=\thisrow{q2}-\thisrow{median},
                    y error minus expr=\thisrow{median}-\thisrow{q1}
                    ] 
                    {results/3d_128_ranks.dat};
        \addplot [red, dashed, domain = 1:13] {2^(-(x-1)/3)*1200};
        \end{semilogyaxis}
    \end{tikzpicture}}
\scalebox{0.75}{
    \begin{tikzpicture}
        \begin{semilogyaxis}[
            scale = 0.7,
            ymin = 25, ymax = 5000,
            xmin=1, xmax = 18,
            xticklabel = \empty,
            x dir=reverse,
            yticklabel = \empty,
            enlarge x limits=0.06,
            ymajorgrids=true,
            line width=0.25mm,
            grid style=dashed,
            xtick align = inside,
            bar width = 5pt
        ]
        \addplot[ybar, draw=blue, fill=blue!30,  error bars/.cd,
                    y explicit,
                    y dir=both, 
                    error bar style={line width=0.3mm, black} ]
                    table [
                    x = level,
                    y = median,
                    y error plus expr=\thisrow{q2}-\thisrow{median},
                    y error minus expr=\thisrow{median}-\thisrow{q1},
                    ]  {results/3d_256_ranks.dat};
    \addplot [red, dashed, domain = 1:16] {2^(-(x-1)/3)*2400};
        \end{semilogyaxis}
    \end{tikzpicture}}
\end{subfigure}

\begin{subfigure}{\textwidth}
\centering
\scalebox{0.75}{
\begin{tikzpicture}
        \begin{semilogyaxis}[
        ylabel style = {align=center},
            scale = 0.7,
            ylabel= {Median non-zeros},
            ymin = 900, ymax = 20000,
            xmin=1, xmax = 12,
            xticklabel = \empty,
            x dir=reverse,
            enlarge x limits=0.1,
            bar width = 8pt,
            ymajorgrids=true,
            grid style=dashed,
            line width=0.25mm,
            xtick align = inside
        ]
        \addplot[ybar, draw=blue, fill=blue!30, error bars/.cd,
        y explicit,
        y dir=both,
        error bar style={line width=0.3mm, black}] 
                    table [
                    x = level,
                    y = median,
                    y error plus expr=\thisrow{q2}-\thisrow{median},
                    y error minus expr=\thisrow{median}-\thisrow{q1}
                    ] 
                    {results/3d_64_nbrs.dat};
    \addplot [red, dashed, domain = 4:10] {2^(-(x-1)/3)*612*24};
        \end{semilogyaxis}
\end{tikzpicture}}
\scalebox{0.75}{
    \begin{tikzpicture}
        \begin{semilogyaxis}[
            scale = 0.7,
            ymin = 900, ymax = 20000,
            xmin=1, xmax = 15,
            xticklabel = \empty,
            x dir=reverse,
            yticklabel = \empty,
            enlarge x limits=0.07,
            bar width = 7pt,
            ymajorgrids=true,
            line width =0.25mm,
            grid style=dashed,
            xtick align = inside
        ]
        \addplot[ybar, draw=blue, fill=blue!30,error bars/.cd,
                    y explicit,
                    y dir=both, 
                    error bar style={line width=0.3mm, black}] 
                    table [
                    x = level,
                    y = median,
                    y error plus expr=\thisrow{q2}-\thisrow{median},
                    y error minus expr=\thisrow{median}-\thisrow{q1}
                    ] 
                    {results/3d_128_nbrs.dat};
        \addplot [red, dashed, domain = 5:13] {2^(-(x-1)/3)*1124*30};
        \end{semilogyaxis}
    \end{tikzpicture}}
\scalebox{0.75}{
    \begin{tikzpicture}
        \begin{semilogyaxis}[
            scale = 0.7,
            ymin = 900, ymax = 20000,
            xmin=1, xmax = 18,
            xticklabel = \empty,
            x dir=reverse,
            yticklabel = \empty,
            enlarge x limits=0.06,
            bar width = 5pt,
            ymajorgrids=true,
            line width=0.25mm,
            grid style=dashed,
            xtick align = inside
        ]
        \addplot[ybar, draw=blue, fill=blue!30,  error bars/.cd,
                    y explicit,
                    y dir=both, 
                    error bar style={line width=0.3mm, black}]
                    table [
                    x = level,
                    y = median,
                    y error plus expr=\thisrow{q2}-\thisrow{median},
                    y error minus expr=\thisrow{median}-\thisrow{q1}
                    ] 
                    {results/3d_256_nbrs.dat};
        \addplot [red, dashed, domain = 1:16] {2^(-(x-1)/3)*2348*30};
        \end{semilogyaxis}
    \end{tikzpicture}}
\end{subfigure}

\begin{subfigure}{\textwidth}
\centering
\scalebox{0.75}{
\begin{tikzpicture}
        \begin{semilogyaxis}[
        xlabel style = {align=center},
            scale = 0.7,
            ybar,
            ylabel={Time to sparsify (s)},
            xlabel = {Level \\
            $N = 64^3$},
            ymin = 1, ymax = 2000,
            xmin=1, xmax = 12,
            xticklabel = \empty,
            x dir=reverse,
            extra x ticks = { 12, 11, 10, 9,8,7,6,5,4,3,2,1 },
            extra x tick labels = {12,,10,,8,,6,,4,,2,},
            enlarge x limits=0.1,
            bar width = 8pt,
            ymajorgrids=true,
            grid style=dashed,
            line width=0.25mm,
            xtick align = inside
        ]
        \addplot+    table          {results/3d_64_sparsify_time.dat};
        \end{semilogyaxis}
\end{tikzpicture}}
\scalebox{0.75}{
    \begin{tikzpicture}
        \begin{semilogyaxis}[
        xlabel style = {align=center},
            scale = 0.7,
            ybar,
            ymin = 1, ymax = 2000,
            xmin=1, xmax = 15,
            xlabel={Level \\
            $N=128^3$},
            xticklabel = \empty,
            x dir=reverse,
            extra x ticks = { 15,14,13,12, 11, 10, 9,8,7,6,5,4,3,2,1 },
            extra x tick labels = {15,,13,,11,,9,,7,,5,,3,,1},
            yticklabel = \empty,
            enlarge x limits=0.07,
            bar width = 7pt,
            ymajorgrids=true,
            line width =0.25mm,
            grid style=dashed,
            xtick align = inside
        ]
        \addplot+ table  {results/3d_128_sparsify_time.dat};
        \end{semilogyaxis}
    \end{tikzpicture}}
\scalebox{0.75}{
    \begin{tikzpicture}
        \begin{semilogyaxis}[
        xlabel style = {align=center},
            scale = 0.7,
            ybar,
            ymin = 1, ymax = 2000,
            xmin=1, xmax = 18,
            xlabel={Level \\
            $N=256^3$},
            xticklabel = \empty,
            x dir=reverse,
            extra x ticks = { 18,17,16,15,14,13,12, 11, 10, 9,8,7,6,5,4,3,2,1 },
            extra x tick labels = {18,,,15,,,12,,,9,,,6,,4,,2,},
            yticklabel = \empty,
            enlarge x limits=0.06,
            bar width = 5pt,
            ymajorgrids=true,
            line width=0.25mm,
            grid style=dashed,
            xtick align = inside
        ]
        \addplot+
                    table  {results/3d_256_sparsify_time.dat};
        \end{semilogyaxis}
    \end{tikzpicture}}
\end{subfigure}
\caption{The median size of an interface, the median number of non-zero entries per row and column (precisely, $\#$ of non-zero columns in $[A_{np}^T \; A_{pn}]$)), and the total time to sparsify the interfaces per level is shown for the 3D advection diffusion problem on the $64 \times 64 \times 64$, $128 \times 128 \times 128$ and $256 \times 256 \times 256$ grids. The red dashed line indicates that the interface size and the neighbors vary as $2^{-(l-1)/3}$ as assumed in the complexity analysis. The total time to sparsify has a long plateau at a given problem size. }
\label{fig: 3d_sparsification}
\end{figure}
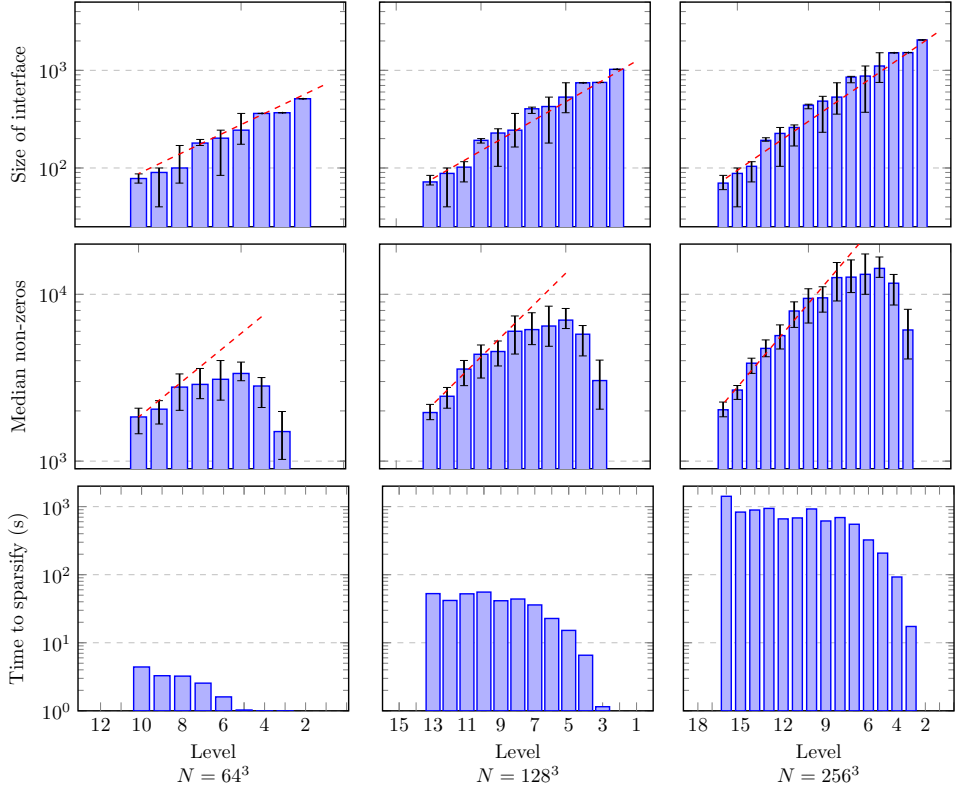

In this section, we give more details on sparsification and the time and memory requirements of the spaQR algorithm. We start with analyzing the singular value decay of a representative block that we compress in \Cref{spars_s} for the 3D advection diffusion problem on the $64 \times 64 \times 64$ grid. \Cref{fig:sv_decay} shows the singular value decay of the block $\begin{bmatrix}
A_{np}^T & A_{pn}
\end{bmatrix}$ corresponding to a representative interface of the top separator at various levels of sparsification. The interface is chosen such that its size is close to the median interface size at that level of sparsification. Roughly, $50\%$ of the singular values are below $\epsilon = 0.1$. Also, note the exponential decay of the singular values after an intial plateau. This observation forms the basis of this work.

Next, we show experimental evidence to back the assumptions made in the complexity analysis. \Cref{fig: 3d_sparsification} shows the median size of an interface ($\#$ rows in $\begin{bmatrix}
A_{np}^T & A_{pn}
\end{bmatrix}$), the number of non-zero rows and columns in the off-diagonal blocks of an interface ($\#$ columns in $\begin{bmatrix}
A_{np}^T & A_{pn}
\end{bmatrix}$), and the total time for sparsification at a given level. The error bars show the inter-quartile range. The red dashed line indicates that the size of the interface grows as $2^{-(l-1)/3}$ where $l$ is the level of the separator of which the interface is a part of. The number of non-zero rows and columns  corresponding to an interface is at most $\mathcal{O}(2^{-(l-1)/3})$ again as indicated by the red dashed line. 

\begin{figure}
    \centering
\scalebox{0.75}{
\begin{tikzpicture}
\begin{loglogaxis}[
    scale = 0.75,
    ylabel={$\text{size}_{\text{top}}$},
    xlabel = {$N$},
    ymin = 400, ymax = 4000,
    xmin=200000, xmax=18000000,
    xtick = {2.5*10^5, 5*10^5, 10^6, 2*10^6, 4*10^6, 8*10^6, 16*10^6},
    xticklabels = {0.25M, ,1M, ,4M, ,16M},
    ymajorgrids=true,
    line width=0.25mm,
    grid style=dashed
]
\addplot coordinates {
    (262144, 512) (512000, 640)
    (884736, 768) (2097152, 1024) (4096000, 1280) (7077888, 1536) (16777216, 2048)
    };

\addplot+[mark = triangle*] coordinates {
    (262144, 823) (512000, 1027) (884736, 1230) (2097152, 1643) (4096000, 2067) (7077888, 2477) (16777216, nan)
    };
\addplot [black, domain = 250000:17000000] {x^(1/3)*7};
\node [ anchor=center] at (5*10^6,900) {$\mathcal{O}(N^{1/3})$};
\end{loglogaxis}
\end{tikzpicture}}
\scalebox{0.75}{
    \begin{tikzpicture}
\pgfplotsset{
every axis legend/.append style={
at={(1.02,1)},
anchor=north west},
legend cell align=left}
        \begin{loglogaxis}[
        legend columns = 1,
        legend style = {draw = none},
            scale = 0.75,
            xlabel = {$N$},
            ylabel = {$\text{mem}_\text{F}$},
            ymin =8*10^7, ymax = 3*10^10,
            xmin=200000, xmax=18000000,
            xtick = {2.5*10^5, 5*10^5, 10^6, 2*10^6, 4*10^6, 8*10^6, 16*10^6},
            xticklabels = {0.25M, ,1M, ,4M, ,16M},
            ymajorgrids=true,
            grid style=dashed,
            line width=0.25mm,
              legend entries = {$\epsilon = 10^{-1}$,  $\epsilon = 10^{-2}$}
        ]
\addplot coordinates {
    (262144, 2*10^8) (512000, 4*10^8 )
    (884736, 7.3*10^8) (2097152, 1.98*10^9) (4096000, 3.93*10^9) (7077888, 7*10^9) (16777216, 1.8*10^10)
    };
\addplot+[mark = triangle*] coordinates {
    (262144, 3.6*10^8) (512000, 7.8*10^8) (884736, 1.48*10^9) (2097152, 4*10^9) (4096000, 8.3*10^9) (7077888, 1.5*10^10) (16777216, nan)
    };

\addplot [black, domain = 250000:17000000] {x*10^3/2};
\node [ anchor=center] at (4*10^6,10^9) {$\mathcal{O}(N)$};
        \end{loglogaxis}
    \end{tikzpicture}}
\caption{The growth in the size of the top separator and the memory required to store the preconditioner with the problem size $N$ for the 3D advection diffusion problem.}
    \label{fig:3d_stop_mem}
\end{figure}
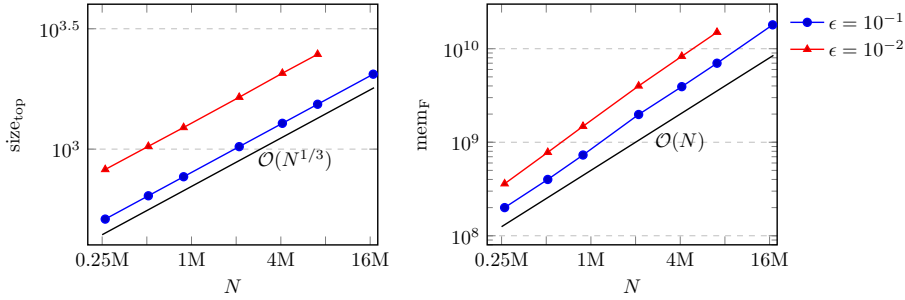

\begin{figure}[tbhp]
    \centering
    \scalebox{0.75}{
    \begin{tikzpicture}
    \pgfplotsset{
every axis legend/.append style={
at={(1.02,1)},
anchor=north west},
legend cell align=left}
        \begin{axis}[
        xlabel style = {align=center},
            xscale = 1.5,
            yscale = 0.75,
            ybar stacked,
            ymin = 1, ymax = 2500,
            xmin=1, xmax = 18,
            xlabel={Level},
            ylabel={Time (s)},
            xticklabel = \empty,
            x dir=reverse,
            extra x ticks = { 18,17,16,15,14,13,12, 11, 10, 9,8,7,6,5,4,3,2,1 },
            extra x tick labels = {18,,,15,,,12,,,9,,,6,,4,,2,},
            enlarge x limits=0.06,
            bar width = 8.5pt,
            ymajorgrids=true,
            line width=0.25mm,
            grid style=dashed,
            xtick align = inside,
            legend entries = {Factorize, Scale, Sparsify, Merge}
        ]
        \addplot[ybar, black, pattern=north east lines] table[
                    x = Level,
                    y = Elimination ] {results/runtime_per_level_256_3d.dat};
        \addplot+[ybar, black, pattern=horizontal lines] table[
                    x = Level,
                    y = Scale] {results/runtime_per_level_256_3d.dat};
        \addplot+[ybar, black, pattern=dots] table[
                    x = Level,
                    y = Sparsify] {results/runtime_per_level_256_3d.dat};
        \addplot+[ybar, black, pattern=vertical lines] table[
                    x = Level,
                    y = Merge] {results/runtime_per_level_256_3d.dat};
        \end{axis}
        \end{tikzpicture}}
    \caption{The runtime per level of the spaQR algorithm split into the four phases: factorize interiors/separators, scale interfaces, sparsify interfaces, and merge the clusters. We skip sparsification for two levels. The results are shown for the 3D advection diffusion problem on a $256 \times 256 \times 256$ grid. }
    \label{fig:runtime_per_level}
\end{figure}
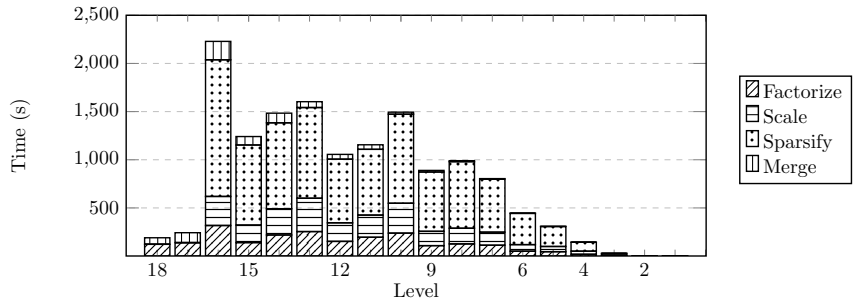

The size of the top separator grows as $\mathcal{O}(N^{1/3})$ as shown in \Cref{fig:3d_stop_mem}. Hence, the cost of factorizing the corresponding block matrix is $\mathcal{O}(N)$. As the cost per level is roughly the same (see \Cref{sec:complexity}) and there are $\Theta(\log(N/N_0))$ levels, this brings the total cost to  $\mathcal{O}(N \log N)$. From \Cref{fig:runtime_per_level}, we see that there is a spike in the runtime at the first level of interface sparsification. Starting sparsification sooner is inefficient as the off-diagonal blocks might not be sufficiently low rank to be beneficial. The runtime in the next few levels have smaller variations which will not matter as we run on bigger matrices. Finally, from \Cref{fig:3d_stop_mem}, we see that the memory required scales as $\mathcal{O}(N)$ as expected.   
\section{Conclusions}

In this work, we develop a novel fast hierarchical QR solver with tunable accuracy for sparse square matrices. We propose an improvement to the base algorithm with a simple block diagonal scaling. We provide theoretical bounds on the error and condition number of the preconditioned matrix. Under certain assumptions (primarily on the required ranks), we proved that the spaQR algorithm scales as $\mathcal{O}(N \log N)$ with a $\mathcal{O}(N)$ solve cost and $\mathcal{O}(N)$ memory requirement. Finally, we provide numerical benchmarks on big sparse unsymmetric linear systems and non-regular problems, which shows the superiority of the algorithm in terms of time and iterations needed to converge to a high accuracy. The additional profiling results give more insight into the algorithm and confirm the validity of the assumptions made in the complexity analysis.

We believe that the spaQR solver opens up exciting new areas that can benefit from fast hierarchical solvers. The algorithm can be extended, with some changes, to rectangular matrices, especially for solving linear least squares problems. This will be investigated in a future work. Further improvements to the algorithm and the implementation are also possible. While the current implementation is sequential, the spaQR algorithm can also be parallelized.

\appendix
\section{Relation to Cholesky}
\label{Related_chol}
An intuitive way to understand sparsification is to consider the relationship between QR and Cholesky. Remember that when $A$ has full column rank, then $R \in \mathbb{R}^{n \times n}$ where $A=QR$ is related to the Cholesky factor $L$ of $A^TA$ by $L = R^T$. 
\[A = QR, \quad  A^TA = R^TQ^TQR = R^TR = LL^T\]
We exploited this relation to use the Nested Dissection ordering of $A^TA$ for performing sparse QR factorization on $A$ and minimize fill-in in $R$. Hence it is necessary that the ND ordering on $A^TA$ is not broken during sparsification. We proved that this is the case for our algorithm in \Cref{well_sep}. In this subsection, we give more intuition behind the algorithm. We discuss different orthogonal transformations that have the potential to sparsify an interface and arrive at the technique used in our spaQR algorithm. 

Consider an interface $p$, its neighbors $n$ (in $G_{A^TA}$) and their associated matrix blocks in $A$ and $A^TA$. Let $w$ be all the remaining nodes. For convenience, denote $S = A^TA$. 
\[A = \begin{bmatrix}
A_{pp} & A_{pn} &  \\
A_{np} & A_{nn} & A_{nw} \\
& A_{wn} & A_{ww}
\end{bmatrix} \quad S = \begin{bmatrix}
S_{pp} & S_{pn} &  \\
S_{np} & S_{nn} & S_{nw} \\
& S_{wn} & S_{ww}
\end{bmatrix}\]
Remember that $S$ is symmetric and $S_{np} = A_{pn}^TA_{pp}+A_{nn}^TA_{np}$. Assume that the off-diagonal blocks in $A$ and $S$ are low rank. 

\subsection*{Sparsification 1}

Consider a low rank approximation of $A_{np}^T$,
\[A_{np}^T = Q_{pp}W_{pn} = \begin{bmatrix}
Q_{pf} & Q_{pc}
\end{bmatrix} \begin{bmatrix}
W_{fn} \\
W_{cn}
\end{bmatrix} \text{ with } \|W_{fn}\|=\mathcal{O}(\epsilon)\] Defining $Q_p = \begin{bmatrix}
Q_{pp} & & \\
& I & \\
& & I
\end{bmatrix}$,
\[AQ_p = \begin{bmatrix}
\Tilde{A}_{ff} & \Tilde{A}_{fc} & A_{fn}  & \\
 \Tilde{A}_{cf} & \Tilde{A}_{cc} & A_{cn} &\\
 \mathcal{O}(\epsilon) & W_{cn}^{T} & A_{nn} & A_{nw} \\ 
 & & A_{wn} & A_{ww}
\end{bmatrix}\quad Q_p^TSQ_p = \begin{bmatrix}
\Tilde{S}_{ff} & \Tilde{S}_{fc} & \Tilde{S}_{fn}  & \\
 \Tilde{S}_{cf} & \Tilde{S}_{cc} & \Tilde{S}_{cn} &\\
 \Tilde{S}_{nf} & \Tilde{S}_{nc} & S_{nn} & S_{nw} \\ 
 & & S_{wn} & S_{ww}
\end{bmatrix}\]
Note that while $\r{\Tilde{A}_{nf}} = \mathcal{O}(\epsilon)$, it is not the case with $\Tilde{S}_{nf}$. \[ \Tilde{S}_{nf} = S_{np}Q_{pf} = A_{pn}^TA_{pp}Q_{pf}+A_{nn}^TA_{np}Q_{pf} =A_{fn}^T \Tilde{A}_{ff} + A_{cn}^T\Tilde{A}_{cf}+A_{nn}^T \r{\Tilde{A}_{nf}}\]
Then, factorizing $f$ block through block Householder in $A$ gives us, 
\[H_f A Q_p = \begin{bmatrix}
R_{ff} & R_{fc} & {R_{fn}} & \\
& \hat{A}_{cc} & \hat{A}_{cn} & \\
& W_{cn}^{T} & A_{nn} & A_{nw} \\
& & A_{wn} & A_{ww}
\end{bmatrix} \quad (H_fAQ_p)^T(H_fAQ_p) = Q_p^TSQ_p \]
However, \Cref{lemma1} does not hold anymore, that is, $\|R_{fn}\| \ne \mathcal{O}(\epsilon)$. Since, $R_{fn}$ cannot be ignored, redefine $R_f$ as
\[R_f = \begin{bmatrix}
R_{ff} & R_{fc} & R_{fn} & \\
& I_c & & \\
& & I_n & \\
& & & I_w
\end{bmatrix}\]
\begin{align*}
    \hat{A} &=H_fAQ_pR_f^{-1} &\qquad \hat{S}&=R_f^{-T}Q_p^TSQ_pR_f \\
    &=\begin{bmatrix}
I_f &  &  & \\
& \hat{A}_{cc} & \hat{A}_{cn} & \\
& W_{cn}^{T} & A_{nn} & A_{nw} \\
& & A_{wn} & A_{ww} 
\end{bmatrix} &\qquad &=\begin{bmatrix}
I_f &  &   & \\
  & \hat{S}_{cc} & \hat{S}_{cn} &\\
  & \hat{S}_{nc} & \hat{S}_{nn} & S_{nw} \\ 
 & & S_{wn} & S_{ww}
\end{bmatrix}
\end{align*}
Also, $R_f^T = L_f + \mathcal{O(\epsilon)}$ where $L_f$ is the block Cholesky factor on elimination of $f$ block in $Q_p^TSQ_p$. Note that this modifies the $S_{nn}$ block which is not desired. For example, when an interface of a (parent) separator is sparsified, its children branches ($n = {n_1, n_2, \dots}$) can interact, breaking the ND ordering. While we have not affected the $A_{nn}$, we will notice the break in the ND ordering when we start factorizing the separators following the interface sparsification step. Although $\hat{A}_{lm}=\hat{A}_{ml} = 0$ for any two originally well-separated separators $l$, $m$ in the ND tree, Householder QR on separator $l$ will modify the columns of $m$ in $\hat{A}$ since $(\hat{A}^T\hat{A})_{lm}=\hat{S}_{lm} \ne \mathcal{O}(\epsilon)$ (see discussion in \Cref{sparseQR_S}). Hence this is not a good approach for sparsification of interfaces. 

\subsection*{Sparsification 2}

Instead consider a low rank approximation of $S_{np}^T$,
\[S_{np}^T = Q_{pp}W_{pn} = \begin{bmatrix}
Q_{pf} & Q_{pc}
\end{bmatrix} \begin{bmatrix}
W_{fn} \\
W_{cn}
\end{bmatrix} \text{ with } \|W_{fn}\|=\mathcal{O}(\epsilon)\] Defining $Q_p$ similarly, we have decoupled $f$ from $n$ in $G_{S}$ but not in $G_A$.
\[AQ_p = \begin{bmatrix}
\Tilde{A}_{ff} & \Tilde{A}_{fc} & A_{fn}  & \\
 \Tilde{A}_{cf} & \Tilde{A}_{cc} & A_{cn} &\\
 \Tilde{A}_{nf} & \Tilde{A}_{nc} & A_{nn} & A_{nw} \\ 
 & & A_{wn} & A_{ww}
\end{bmatrix}\quad Q_p^TSQ_p = \begin{bmatrix}
\Tilde{S}_{ff} & \Tilde{S}_{fc} & \mathcal{O}(\epsilon)  & \\
 \Tilde{S}_{cf} & \Tilde{S}_{cc} &  W_{cn} &\\
 \mathcal{O}(\epsilon) &  W_{cn}^{T} & S_{nn} & S_{nw} \\ 
 & & S_{wn} & S_{ww}
\end{bmatrix}\]
\[ \mathcal{O}(\epsilon) = \r{\Tilde{S}_{nf}} = S_{np}Q_{pf} = A_{pn}^TA_{pp}Q_{pf}+A_{nn}^TA_{np}Q_{pf} \]
$\r{\Tilde{S}_{nf}} = \mathcal{O}(\epsilon)$ does not necessarily imply that $A_{np}Q_{pf} = \Tilde{A}_{nf} = \mathcal{O}(\epsilon)$. Factorizing $f$ in $A$ through Householder QR, 
\[
H_f A Q_p = \begin{bmatrix}
R_{ff} & R_{fc} & \r{{R_{fn}}} & \\
& \hat{A}_{cc} & \hat{A}_{cn} & \\
& \hat{A}_{nc} & \hat{A}_{nn} & A_{nw} \\
& & A_{wn} & A_{ww}
\end{bmatrix} \qquad (H_fAQ_p)^T(H_fAQ_p) = Q_p^TSQ_p 
\]
We can show that $\|\r{R_{fn}}\| \leq \frac{\epsilon}{\sigma_{\text{min}}(A_p)}$, starting from the fact that $\r{\Tilde{S}_{nf}} = \mathcal{O}(\epsilon)$ and following the same procedure as \Cref{lemma2}. However $A_{nn}$ block is modified and can be dense. Hence, we will form connections between originally well-separated separators by \Cref{Coro1}. The elimination tree can become fully connected. However since $S_{nn}$ is not affected, the final sparsity pattern of $R$ is unchanged. Even so, this is still not a preferred method of sparsification. With this method the trailing matrix in $A$ becomes more dense after each step of interface sparsification, leading to a higher computational cost.

\subsection*{Sparsification 3}

We now describe spaQR. Keeping the drawbacks of the previous sparsification approaches in mind, we instead try to find an orthogonal transformation such that both $\hat{S}_{nf} \approx \mathcal{O}(\epsilon)$ and $\hat{A}_{nf} = \mathcal{O}(\epsilon)$. This is why the sparsification technique of the spaQR algorithm discussed in \Cref{spars_s} works. With a low rank appoximation of $\begin{bmatrix}
\hat{A}_{np}^{T} & \sigma\hat{A}_{pp}^{T}\hat{A}_{pn}
\end{bmatrix}$
\[ AQ_p = \begin{bmatrix}
\Tilde{A}_{ff} & \Tilde{A}_{fc} & A_{fn}  & \\
 \Tilde{A}_{cf} & \Tilde{A}_{cc} & A_{cn} &\\
 \mathcal{O}(\epsilon) & W_{cn}^{(1)T} & A_{nn} & A_{nw} \\ 
 & & A_{wn} & A_{ww}
\end{bmatrix} \qquad 
Q_p^TSQ_p = \begin{bmatrix}
\Tilde{S}_{ff} & \Tilde{S}_{fc} & \mathcal{O}(\epsilon)  & \\
 \Tilde{S}_{cf} & \Tilde{S}_{cc} &  W_{cn} &\\
 \mathcal{O}(\epsilon) &  W_{cn}^{T} & S_{nn} & S_{nw} \\ 
 & & S_{wn} & S_{ww}
\end{bmatrix}\]
\[\r{\Tilde{A}_{nf}} = A_{np}Q_{pf} = \r{W_{fn}^{(1)T}} = \mathcal{O}(\epsilon)\]
\[ \r{\Tilde{S}_{nf}} = S_{np}Q_{pf} = A_{pn}^TA_{pp}Q_{pf}+A_{nn}^TA_{np}Q_{pf} =  \frac{\r{W_{fn}^{(2)T}}}{\sigma}
+A_{nn}^T \r{W_{fn}^{(1)T}} \approx \mathcal{O}(\epsilon) \sigma_{\text{min}}(A_p)\]
On performing Householder on $f$ block, we get $\|\r{R_{fn}}\| \leq \epsilon $. However on performing Cholesky on $f$ block in $Q_p^TSQ_p$, we get,
\[\|\r{L_{nf}}\| =  \|\Tilde{S}_{nf}\Tilde{S}_{ff}^{-1}\| \leq \| \Tilde{S}_{nf}\| \|\Tilde{S}_{ff}^{-1}\| = \mathcal{O}(\epsilon) \sigma_{\text{min}}(A_p) \frac{1}{\lambda_{\text{min}}(S_{ff})} = \frac{\mathcal{O}(\epsilon)}{\sigma_{\text{min}}(A_p)} \]
since, $\lambda_{\text{min}}(S_{ff}) =\sigma_{\text{min}}(A_p)^2 $. The trailing matrices are, 
\begin{align*}
    \hat{A} &=H_fAQ_pR_f^{-1} &\qquad \hat{S}&=R_f^{-T}Q_p^TSQ_pR_f \\&=\begin{bmatrix}
I_f &  &  & \\
& \hat{A}_{cc} & \hat{A}_{cn} & \\
& W_{cn}^{(1)T} & A_{nn} & A_{nw} \\
& & A_{wn} & A_{ww}
\end{bmatrix} &\qquad  &= \begin{bmatrix}
I_f &  &   & \\
  & \hat{S}_{cc} & \hat{S}_{cn} &\\
  & {S}_{nc} & {S}_{nn} & S_{nw} \\ 
 & & S_{wn} & S_{ww}
\end{bmatrix}
\end{align*}

where the error in both $(\hat{A})_{nn}$ and $(\hat{S})_{nn}$ are $\mathcal{O}(\epsilon^2)$.

\section{Proof of \Cref{lemma1}}
\label{proof:lem1}

\sigchoice*
\begin{proof}
\begin{align*}
    \sigma Q_{pf}^T A_{pp}^TA_{pn} &= W_{fn}^{(2)} \\
    \sigma (A_{pp}Q_{pf})^T A_{pn} &= W_{fn}^{(2)} 
\end{align*}
\[A_{pp}Q_{pf} = \begin{bmatrix}
\Tilde{A}_{ff} \\
\Tilde{A}_{cf} \\
\mathcal{O}(\epsilon)\\
\\
\end{bmatrix}\]
And, \[H_{ff}^T A_{pp}Q_{pf} = R_{ff} + \mathcal{O}(\epsilon)\]
Then, 
\[ A_{pp}Q_{pf} = H_{ff}R_{ff} \]
This gives us, 
\begin{align*}
    W_{fn}^{(2)} &= \sigma (A_{pp}Q_{pf})^T A_{pn}
    = \sigma (H_{ff} R_{ff})^T A_{pn} \\
    &= \sigma R_{ff}^T (H_{ff}^T A_{pn})
    = R_{ff}^T R_{fn}\\
    R_{fn} &= \frac{1}{\sigma} R_{ff}^{-T}W_{fn}^{(2)}\\
    \|R_{fn}\|_{_2} &\leq \frac{1}{|\sigma|}\frac{1}{\sigma_{\text{min}}(R_{ff})} \epsilon
\end{align*}
Choosing, $\frac{1}{\sigma} = \sigma_{\text{min}}(A_p) \leq\sigma_{\text{min}}(R_{ff}) $, where $A_p = \begin{bmatrix}
A_{pp} \\
A_{np}
\end{bmatrix}$ proves the lemma.
\end{proof}

\section{Proof of \Cref{well_sep}}
\label{proof:wellsep}
\begin{lemma}
\label{lemma2}
Consider two separators $l$, $m$ such that $A_{lm}=A_{ml}= \mathcal{O}(\epsilon)$ and $(A^TA)_{lm}=(A^TA)_{ml}=\mathcal{O}(\epsilon)$. If $H_l$ is the block Householder transform on $A_{:l}$, then $H_lA_{:m} = A_{:m}+\mathcal{O(\epsilon)}$.
\end{lemma}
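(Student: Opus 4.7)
The plan is to decompose the block Householder into its individual reflector factors and argue by induction over them. Write $H_l = H_1 H_2 \cdots H_{|l|}$, where $H_i = I - \beta_i v_i v_i^{\top}$ is the $i$-th elementary reflector produced by the Householder QR sweep on $A_{:l}$. The vector $v_i$ is supported on the rows $l_i, l_{i+1},\dots$ within the row-support of $A_{:l}$, and has the standard form $v_i = A^{[i-1]}_{l_i{:}\mathrm{end},\, l_i} \mp \bigl\|A^{[i-1]}_{l_i{:}\mathrm{end},\, l_i}\bigr\|\, e_{l_i}$, where $A^{[i-1]} := H_{i-1}\cdots H_1 A$ is the running state of the matrix.

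The induction hypothesis I would carry is $A^{[i-1]}_{:m} = A_{:m} + \mathcal{O}(\epsilon)$ (trivially true at $i=1$). Since $H_i A^{[i-1]}_{:m} - A^{[i-1]}_{:m} = -\beta_i v_i\, (v_i^{\top} A^{[i-1]}_{:m})$ and $\beta_i\|v_i\|$ is $\mathcal{O}(1)$ by the standard Householder normalization (assuming the pivots stay bounded away from zero, which is needed anyway for the overall QR), the inductive step reduces to proving $v_i^{\top} A^{[i-1]}_{:m} = \mathcal{O}(\epsilon)$. Expanding with the explicit form of $v_i$ gives
\[
v_i^{\top} A^{[i-1]}_{:m} = \bigl(A^{[i-1]}_{l_i{:}\mathrm{end},\, l_i}\bigr)^{\!\top} A^{[i-1]}_{l_i{:}\mathrm{end},\, m} \;\mp\; \bigl\|A^{[i-1]}_{l_i{:}\mathrm{end},\, l_i}\bigr\|\; A^{[i-1]}_{l_i,\, m}.
\]
The second term is $\mathcal{O}(\epsilon)$ because $A^{[i-1]}_{l_i,\, m} = A_{l_i,\, m} + \mathcal{O}(\epsilon)$ by the induction hypothesis, and $A_{l_i,\, m}$ is an entry of the block $A_{lm} = \mathcal{O}(\epsilon)$.

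For the first term, the key observation is that the full-column inner product is preserved by orthogonal transformations: $(A^{[i-1]})^{\top} A^{[i-1]} = A^{\top} A$, so $(A^{[i-1]}_{:,\, l_i})^{\top} A^{[i-1]}_{:m} = (A^{\top} A)_{l_i,\, m} = \mathcal{O}(\epsilon)$. The partial inner product over only rows $l_i$ onwards differs from this full one by the contributions of the already-triangularized rows $l_1,\dots,l_{i-1}$; each such contribution has the form $A^{[i-1]}_{l_k,\, l_i}\, A^{[i-1]}_{l_k,\, m}$ where the first factor is bounded by $\|A_{:,\, l_i}\|$ (since Householders preserve column norms) and the second factor is $\mathcal{O}(\epsilon)$ by the induction hypothesis applied to the assumption $A_{lm} = \mathcal{O}(\epsilon)$. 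Summing at most $|l|-1$ such terms keeps the total difference at $\mathcal{O}(\epsilon)$, which closes the inductive step. Chaining the $|l|$ steps then gives $H_l A_{:m} = A_{:m} + \mathcal{O}(\epsilon)$.

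The main obstacle is exactly this partial-vs-full inner product bookkeeping: orthogonal invariance of $A^{\top} A$ controls the full inner product of columns $l_i$ and $m$, but $v_i$ pairs only with rows $l_i$ and below, so contributions from the upper rows $l_1,\dots,l_{i-1}$ have to be peeled off. This is precisely where the hypothesis $A_{lm} = \mathcal{O}(\epsilon)$ enters; without it, the propagated entries $A^{[i-1]}_{l_k,\, m}$ would be $\mathcal{O}(1)$ rather than $\mathcal{O}(\epsilon)$ and the inductive step would fail.
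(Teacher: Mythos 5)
Your proof is correct and follows essentially the same route as the paper: decompose the block Householder into elementary reflectors and induct over them, using $(A^TA)_{lm}=\mathcal{O}(\epsilon)$ to control the column inner products and $A_{lm}=\mathcal{O}(\epsilon)$ to control the pivot-row contributions, so that each reflector perturbs $A_{:m}$ by only $\mathcal{O}(\epsilon)$. The paper's proof treats only the first column explicitly and then appeals to ``induction on the columns of $A_{:l}$''; your version additionally spells out the partial-versus-full inner product bookkeeping (peeling off the already-pivoted rows $l_1,\dots,l_{i-1}$, whose column-$m$ entries are $\mathcal{O}(\epsilon)$ by the hypothesis $A_{lm}=\mathcal{O}(\epsilon)$ and the induction hypothesis) that the paper leaves implicit, which is a finer level of detail but not a different method.
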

\begin{proof}
Consider the Householder transform on the columns of separator $l$. Let $x$ be the first column of $A_{:l}$. Then the Householder vector $v$ is defined as $v = x \pm \|x\|e_{l_1}$.  Since, 
\[ x^TA_{:m} = \mathcal{O}(\epsilon), e_{l_1}^TA_{:m} = \mathcal{O}(\epsilon) \]
we have, $v^TA_{:m} =\mathcal{O}(\epsilon)$. Then \[H_{l_1}A_{:m}=A_{:m}-\frac{2\mathcal{O}(\epsilon)}{v^Tv}v =A_{:m}+ \mathcal{O}(\epsilon)\mathbbm{1} \]
The lemma follows by doing an induction on the columns of $A_{:l}$
\end{proof}

\begin{corollary}
\label{Coro1}
For any two separators $l, m$ such that $(A^TA)_{lm}=(A^TA)_{ml} = \mathcal{O}(\epsilon)$ but $A_{lm}, A_{ml} \ne \mathcal{O}(\epsilon)$, then $\hat{A}_{:m} = H_lA_{:m} \ne A_{:m}+\mathcal{O}(\epsilon)$. However, $\hat{A}_{:m}^T \hat A_{:l} = \mathcal{O}(\epsilon)$, where $\hat{A}_{:l}= H_lA_{:l}$ 
\end{corollary}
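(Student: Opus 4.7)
\textbf{Proof plan for \Cref{Coro1}.} The statement has two independent parts, and my strategy is to prove them with essentially orthogonal tools: the ``however'' conclusion follows directly from the orthogonality of $H_l$, while the inequality $\hat{A}_{:m} \neq A_{:m} + \mathcal{O}(\epsilon)$ is shown by revisiting the Householder-vector computation in the proof of \Cref{lemma2} and tracking which of the two cancellation terms survives when $A_{lm}$ is not small.

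\textbf{Step 1 (orthogonality gives the preservation of the $A^TA$ block).} Since $H_l$ is an orthogonal transformation (a product of Householder reflectors), $H_l^T H_l = I$. Therefore
\[
\hat{A}_{:m}^T \hat{A}_{:l} = (H_l A_{:m})^T (H_l A_{:l}) = A_{:m}^T H_l^T H_l A_{:l} = A_{:m}^T A_{:l} = (A^TA)_{ml} = \mathcal{O}(\epsilon),
\]
which is exactly the ``however'' claim. This step requires no hypothesis on $A_{lm}$ at all --- it is purely algebraic.

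\textbf{Step 2 (inequality via the Householder vector).} Here I follow the inductive setup in the proof of \Cref{lemma2} but look at the one place where the argument breaks. Let $x$ be the first column of $A_{:l}$ and let $v = x \pm \|x\| e_{l_1}$ be the first Householder vector. Then the update to $A_{:m}$ is
\[
H_{l_1} A_{:m} = A_{:m} - \frac{2\,v^T A_{:m}}{v^T v}\, v,
\qquad
v^T A_{:m} = x^T A_{:m} \pm \|x\|\, e_{l_1}^T A_{:m}.
\]
By $(A^TA)_{lm} = \mathcal{O}(\epsilon)$ the first term $x^T A_{:m}$ is $\mathcal{O}(\epsilon)$, exactly as before. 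However, under the present hypothesis $A_{lm} \neq \mathcal{O}(\epsilon)$, so there is at least one row index $l_i$ in the separator $l$ along which the row $e_{l_i}^T A_{:m}$ is not $\mathcal{O}(\epsilon)$. Choosing the Householder indexing so that $l_1$ is such a row (or propagating the induction to the step where this row is reached), the term $\|x\|\, e_{l_1}^T A_{:m}$ is not $\mathcal{O}(\epsilon)$, and it does not cancel against the $\mathcal{O}(\epsilon)$ contribution from $x^T A_{:m}$. Consequently the correction $-2(v^T A_{:m}/v^Tv)\, v$ has a component of order $\Theta(1)$ along $v$, so $\hat{A}_{:m} \neq A_{:m} + \mathcal{O}(\epsilon)$.

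\textbf{Expected obstacle.} The only subtle point is the non-cancellation argument in Step 2: one has to rule out pathological cases where the large entries of $e_{l_1}^T A_{:m}$ happen to cancel against $x^T A_{:m}$ for every column $x$ of $A_{:l}$. I would handle this by pointing out that $x^T A_{:m}$ is a single vector of size $\mathcal{O}(\epsilon)$ whereas $\|x\|\, e_{l_1}^T A_{:m}$ has magnitude $\Theta(\|A_{lm}\|)$, so the sum is generically $\Theta(\|A_{lm}\|)$, not $\mathcal{O}(\epsilon)$; if the first Householder step happens to produce cancellation, the inductive application of the same argument to the subsequent Householder steps (which use new reflectors built from the updated trailing columns of $A_{:l}$) will expose the non-cancellation, since the full block reflector $H_l$ cannot simultaneously leave $A_{:m}$ unchanged (up to $\mathcal{O}(\epsilon)$) while zeroing out all sub-diagonal entries of $A_{:l}$ when the rows of $A_{:m}$ in positions $l$ are not small. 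This qualitative statement is sufficient for the corollary, which only asserts an inequality, not a lower bound.
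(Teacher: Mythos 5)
Your proposal takes essentially the same route as the paper: the paper's own argument is exactly your Step 2, forming the Householder vector $v = x \pm \|x\|e_{l_1}$ and observing that $v^TA_{:m}$ contains the term $\|x\|\,e_{l_1}^TA_{:m}$, which is no longer $\mathcal{O}(\epsilon)$ when $A_{lm}$ is not small, so the correction along $v$ is not negligible. You are in fact somewhat more careful than the paper, which tacitly places the large entry in the first row and does not discuss possible cancellation, and you also spell out the orthogonality argument for $\hat{A}_{:m}^T\hat{A}_{:l} = \mathcal{O}(\epsilon)$, which the paper leaves implicit.
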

\begin{proof}
This can be seen from the proof of \Cref{lemma2}; $c = e_{l1}^TA_{:m} \ne \mathcal{O}(\epsilon)$ and hence $v^TA_{:m} = \|x\|e_{l1}^TA_{:m} = c\|x\|$. Hence, $H_{l_1}A_{:m}=A_{:m}-\frac{2c\|x\|}{v^Tv}v$. 
\end{proof}

\wellsep*
\begin{proof}
Consider two separators $l$ and $m$ such that both are neighbors of an interface $p$ that is being sparsified. Let the separators $l$ and $m$ be such that $R_{lm}=0$ during direct QR on the matrix A. Trivially, this also implies that $A_{lm}=A_{ml}=0$, $(A^TA)_{lm}=(A^TA)_{ml}=0$. Then matrix block associated with the interface $p$ before sparsification has the following structure, 
\[A = \begin{bmatrix}
A_{pp} & A_{pl} & A_{pm} & \\
A_{lp} & A_{ll} & & A_{lw} \\
A_{mp} & & A_{mm} & A_{mw} \\
&A_{wl} & A_{wm} & A_{ww}
\end{bmatrix}\]
After sparsification and factorization of the `fine' nodes,
\[\hat{A} = H_fAQ_pR_f^{-1} = \begin{bmatrix}
I_f & & \r{R_{fl}} & \r{R_{fm}} & \\
& \hat{A}_{cc} & \hat{A}_{cl} & \hat{A}_{cm} & \\
& \hat{A}_{lc} & A_{ll}& & A_{lw} \\
& \hat{A}_{mc} & & A_{mm} & A_{mw} \\
& & A_{wl} &A_{wm} & A_{ww}
\end{bmatrix}\]
 The algorithm proceeds with matrix $\hat{A}$. While we still have, $\hat{A}_{lm}=\hat{A}_{ml}=\mathcal{O}(\epsilon^2)$, which is the error due to ignoring the term $\Tilde{A}_{nf} = W_{fn}^{(1)T} = \mathcal{O}(\epsilon)$, we need to show that $(\hat{A}^T \hat{A})_{lm}\approx 0$ 
\[ (\hat{A}^T\hat{A}) = (H_fAQ_pR_f^{-1})^T(H_fAQ_pR_f^{-1}) = (AQ_pR_f^{-1})^T(AQ_pR_f^{-1}) \]
Since, $R_f^{-1}$, $Q_p$ are operations that are only applied the columns of $p$, the columns of $l$ and $m$ are unaffected. This implies,
\[(\hat{A}^T\hat{A})_{lm} = (A^TA)_{lm} = 0 \]
\[ (\hat{A}^T\hat{A})_{lm} = \begin{bmatrix}
\r{R_{fl}} \\
\hat{A}_{cl}\\
A_{ll}\\
\\
A_{wl}
\end{bmatrix}^T \begin{bmatrix}
\r{R_{fm}} \\
\hat{A}_{cm}\\
\\
A_{mm}\\
A_{wm}
\end{bmatrix} = 0 \]
By \Cref{lemma1}, $\|R_{fl}\|_{_2}=\|R_{fm}\|_{_2}=\mathcal{O}(\epsilon)$ and are dropped. Then,
\[ (\hat{A}^T\hat{A})_{lm} = \begin{bmatrix}
\\
\hat{A}_{cl}\\
A_{ll}\\
\\
A_{wl}
\end{bmatrix}^T \begin{bmatrix}
 \\
\hat{A}_{cm}\\
\\
A_{mm}\\
A_{wm}
\end{bmatrix} = \mathcal{O}(\epsilon^2) \]
By \Cref{lemma2}, sparse QR on separator $l$ (or $m$) will not affect separator $m$ up to a tolerance of $\mathcal{O}(\epsilon^2)$. And since interface sparsification does not affect the non-neighbor blocks, sparsification of $l$ or $m$ will not affect the other (up to the same tolerance of $\mathcal{O}(\epsilon)$). Hence the algorithm can proceed without affecting the elimination tree. 
\end{proof}


\section*{Acknowledgements}
The computing for this project was performed on the Sherlock research cluster, hosted at Stanford University. We thank Stanford University and the Stanford Research Computing Center for providing the computational resources and support that contributed to this research. This work was partly funded by a grant from Sandia National Laboratories (Laboratory Directed Research and Development [LDRD]) entitled ``Hierarchical Low-rank Matrix Factorizations,'' and a grant from the National Aeronautics and Space Administration (NASA, agreement \#80NSSC18M0152). We thank L\'eopold Cambier, Erik G.\ Boman and Juan Alonso for the numerous discussions. We also thank Jordi Feliu-F\'aba and Steven Brill from Stanford ICME for valuable discussions.

\bibliographystyle{siamplain}
\bibliography{main}

\end{document}